\newtheorem{theorem}{Theorem}[section]
\newtheorem{lemma}{Lemma}[section]
\newtheorem{corollary}{Corollary}[section]
\newtheorem{conjecture}{Conjecture}[section]
\newtheorem{definition}{Definition}[section]
\newtheorem{remark}{Remark}[section]
\newtheorem{question}{Question}[section]
\numberwithin{equation}{section}
\def\Z{\mathbb Z}
\def\R{\mathbb R}
\def\N{\mathbb N}
\def\d{\partial}
\def\a{\alpha}
\def\g{\gamma}
\def\l{\lambda}
\title{Algebras of smooth functions and holography of traversing flows}
 \author[G.~Katz]{Gabriel Katz}
\address{MIT, Department of Mathematics, 77 Massachusetts Ave., Cambridge, MA 02139, U.S.A.}
\email{gabkatz@gmail.com}
\begin{document}

\maketitle 

\begin{abstract} Let $X$ be a smooth compact manifold and $v$ a vector field on $X$ which admits a smooth function $f: X \to \R$ such that $df(v) > 0$. Let $\partial X$ be the boundary of $X$. We denote by $C^\infty(X)$ the algebra of smooth functions on $X$ and by $C^\infty(\partial  X)$ the algebra of smooth functions on $\partial  X$. With the help of $(v, f)$, we introduce two subalgebras $\mathcal A(v)$ and $\mathcal B(f)$ of $C^\infty(\partial  X)$ and prove  (under mild hypotheses) that $C^\infty(X) \approx \mathcal A(v) \hat\otimes \mathcal B(f)$, the topological tensor product. Thus the topological algebras $\mathcal A(v)$ and $\mathcal B(f)$, \emph{viewed  as boundary data}, allow for a reconstruction of $C^\infty(X)$. As a result, $\mathcal A(v)$ and $\mathcal B(f)$ allow for the recovery of the smooth topological type of the bulk $X$.
\end{abstract}

%\section{}
%\subsection{}
\section{Introduction}

It is classically known that the normed algebra $C^0(X)$ of continuous real-valued functions on a compact space $X$ determines its topological type \cite{GRS}, \cite{Ga}, \cite{Br}. In this context, $X$ is interpreted as the space of maximal ideals of the algebra $C^0(X)$. 
In a similar spirit, the algebra $C^\infty(X)$  of smooth functions on a compact smooth manifold $X$ (the algebra $C^\infty(X)$ is considered in the Whitney topology \cite{W3}) determines the \emph{smooth} topological type of $X$ \cite{KMS}, \cite{Na}. Again, $X$ may be viewed as the space of maximal ideals of the algebra $C^\infty(X)$. \smallskip

Recall that a harmonic function $h$ on a compact connected Riemannian manifold $X$ is uniquely determined by its restriction to the smooth boundary $\d X$ of $X$. In other words, the Dirichlet  boundary value problem has a unique solution in the space of harmonic functions. Therefore, the vector space $\mathcal H(X)$ of harmonic functions on $X$ is rigidly determined by its restriction (trace) $\mathcal H^\d(X) := \mathcal H(X)|_{\d X}$ to the boundary $\d X$. As we embark on our journey,  this fact will serve us as a beacon.  \smallskip

This paper revolves around the following question: 

{\sf Which algebras of smooth functions on the boundary $\d X$ can be used to reconstruct the algebra $C^\infty(X)$ and thus the smooth topological type of $X$?} \smallskip

\noindent Remembering the flexible nature of smooth functions (in contrast with the rigid harmonic ones), at the first glance, we should anticipate the obvious answer "None!".  However, %the main point of this paper is that 
when $X$ carries an additional geometric structure, then the question, surprisingly, may have  a positive answer. The geometric structure on $X$ that does the trick is a vector field (i.e., an ordinary differential equation), drawn from a massive class of vector fields which we will introduce below.\smallskip

Let $X$ be a compact connected smooth $(n+1)$-dimensional manifold with boundary and $v$ a smooth vector field admitting a {\sf Lyapunov function} $f: X \to \R$ so that $df(v) > 0$. We call such vector fields {\sf traversing}. We assume that $v$ is in general position with respect to the boundary $\d X$  and call such vector fields {\sf boundary generic} (see \cite{K1} or \cite{K3}, Definition 5.1, for the notion of {\sf boundary generic} vector fields). Temporarily, it will be sufficient to think of the boundary generic vector fields $v$ as having only $v$-trajectories that are tangent to the boundary $\d X$ with the order of tangency less than or equal to $\dim(X)$. Section 3 contains a more accurate definition. \smallskip

Informally, we use the term ``{\sf holography}" when some residual structures on the boundary $\d X$ are sufficient for a reconstruction of similar structures on the bulk $X$. \smallskip

Given such a triple $(X, v, f)$, in Section 3, we will introduce two subalgebras, $\mathcal A(v) = C^\infty(\d X, v)$ and $\mathcal B(f) = (f^\d)^\ast(C^\infty(\R))$, of the algebra $C^\infty(\d X)$, which depend only on $v$ and $f$, respectively. By Theorem \ref{th.main_alg}, $\mathcal A(v)$ and $\mathcal B(f)$ will allow for a reconstruction of the algebra $C^\infty(X)$.  In fact, the boundary data, generated by these subalgebras, lead to a unique (rigid) ``solution" $$C^\infty(X) \approx C^\infty(\d X, v) \, \hat{\otimes}\, (f^\d)^\ast(C^\infty(\R)),$$ the topological tensor product of the two algebras. As a result, the pair $\mathcal A(v)$, $\mathcal B(f)$, ``residing on the boundary", determines the smooth topological type of the bulk $X$ and of the $1$-dimensional foliation $\mathcal F(v)$, generated by the $v$-flow. 
%%%%%%%%%%%%%%%%
 
\section{Holography on manifolds with boundary and the causality maps}

Let $X$ be a compact connected smooth $(n+1)$-dimensional manifold with boundary $\d_1 X =_{\mathsf{def}} \d X$ (we use this notation for the boundary $\d X$ to get some consistency with  similar notations below), and $v$ a smooth traversing vector field, admitting a smooth {\sf Lyapunov function} $f: X \to \R$.  We assume that $v$ is {\sf boundary generic}.% (see \cite{K1} or \cite{K3}, Definition 5.1). 

We denote by $\d_1^+X(v)$ the subset of $\d_1 X$ where $v$ is directed inwards of $X$ or is tangent to $\d_1 X$. Similarly, $\d_1^-X(v)$ denotes the subset of $\d_1 X$ where $v$ is directed outwards of $X$ or is tangent to $\d_1 X$. \smallskip

Let $\mathcal F(v)$ be the $1$-dimensional {\sf oriented foliation}, generated by the traversing $v$-flow.\smallskip

We denote by $\g_x$ the $v$-trajectory through $x \in X$. Since $v$ is traversing and boundary generic, each $\g_x$ is homeomorphic either a closed segment, or to a singleton \cite{K1}.\smallskip

In what follows, we embed the compact manifold $X$ in an open manifold $\hat X$ of the same dimension so that $v$ extends to a smooth vector field $\hat v$ on $\hat X$,  $f$ extends to a smooth function $\hat f$ on $\hat X$, and $d\hat f(\hat v) > 0$ in $\hat X$. We treat $(\hat X, \hat v, \hat f)$ as a germ in the vicinity of $(X, v,  f)$.

\begin{definition}\label{def.property_A} 
We say that a boundary generic and traversing vector field $v$ possesses  {\sf  Property $\mathsf A$}, if each $v$-trajectory $\g$ is either transversal to $\d_1 X$ at {\sf some} point of the set $\g \cap \d_1 X$, or $\g \cap \d_1 X$ is a singleton $x$ and $\g$ is quadratically tangent to $\d_1 X$ at $x$. \hfill $\diamondsuit$
\end{definition}
 \smallskip

A traversing vector field $v$ on $X$ induces a structure of a {\sf partially-ordered set} $(\d_1 X, \succ_v)$ on the boundary $\d_1 X$: for $x, y \in \d_1 X$, we write $y \succ x$ if the two points lie on the same $v$-trajectory $\g$ and  $y$ is reachable from $x$ by moving in the $v$-direction. \smallskip
  
We denote by $\mathcal T(v)$ the {\sf trajectory space} of $v$ and by $\Gamma: X \to \mathcal T(v)$ the obvious projection. For a traversing and boundary generic $v$, $\mathcal T(v)$ is a compact space in the topology induced by $\Gamma$. 
Since any trajectory of a traversing $v$ intersects the boundary $\d_1 X$, we get that $\mathcal T(v)$ is a quotient of $\d_1 X$ modulo the partial order relation $ \succ_v$. 

\begin{figure}[ht]\label{fig1.4}
\centerline{\includegraphics[height=2.3in,width=3in]{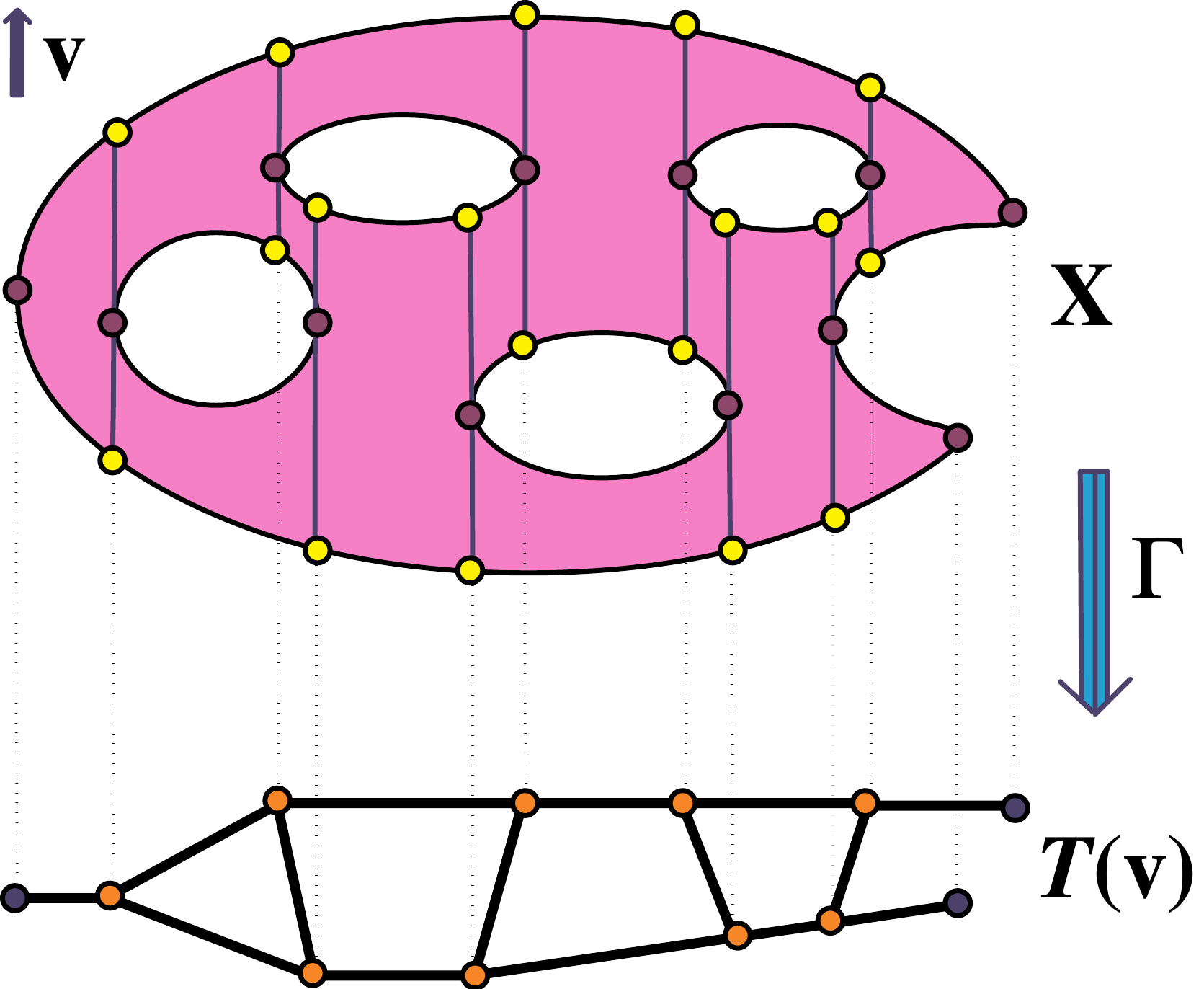}}
\bigskip
\caption{\small{The map $\Gamma: X \to \mathcal T(v)$ for a traversally generic (vertical) vector field $v$ on a disk with $4$ holes. The trajectory space is a graph whose verticies are of valencies $1$ and $3$. The restriction of $\Gamma$ to $\d_1X$ is a surjective map $\Gamma^\d$ with finite fibers of cardinality $3$ at most; a generic fiber has cardinality $2$.}}
\end{figure}
\smallskip

A traversing and boundary generic $v$ gives rise to the {\sf causality (scattering) map}
\begin{eqnarray}
C_v: \d_1^+X(v) \to \d_1^-X(v)
\end{eqnarray}
that takes each point $x \in \d_1^+X(v)$ to the unique consecutive point $y \in \g_x \cap \d_1^-X(v)$ that can be reached from $x$ in the $v$-direction. If no such $y \neq x$ is available, we put $C_v(x) = x$.  We stress that typically $C_v$ is a \emph{discontinuous} map (see Fig. \ref{fig.1.7}). \smallskip

\begin{figure}[ht]\label{fig.1.7}
\centerline{\includegraphics[height=2.5in,width=4in]{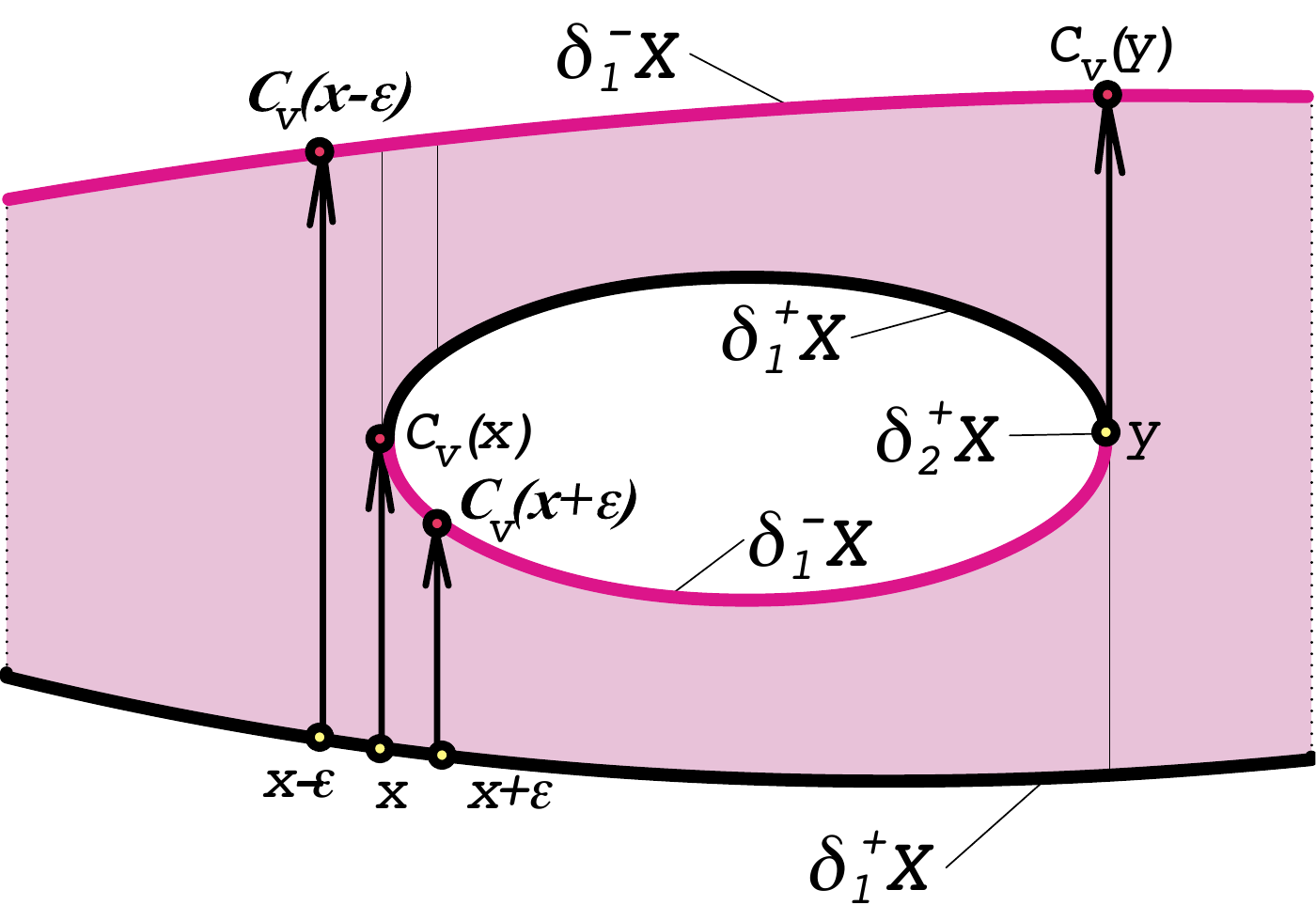}}
\bigskip
\caption{\small{An example of the causality map $C_v: \d_1^+X(v) \to \d_1^-X(v)$.} Note the essential discontinuity of $C_v$ in the vicinity of $x$. }
\end{figure}

We notice that, for any smooth positive function $\l: X \to \R_+$, we have $C_{\l\cdot v} = C_v$; thus the causality map depends only on the conformal class of a traversing vector field $v$. In fact, $C_v$ depends only on the oriented foliation $\mathcal F(v)$, generated by the $v$-flow.
\smallskip

In the paper, we will discuss two kinds of intimately related holography problems. The first kind amounts to the question: {\sf To what extend given boundary data are sufficient for reconstructing the unknown bulk and the traversing $v$-flow on it, or rather, the foliation $\mathcal F(v)$?} This question may be represented symbolically by the two diagrams:
 \begin{eqnarray}\label{3_Reconstruction Holo}
\bullet  \text{\sf Holographic Reconstruction Problem}\nonumber \\ 
(\d_1X, \; \succ_v, )\; \stackrel{\mathbf{??}}{\longrightarrow} \;
 (X,\; \mathcal F(v)),\\
 (\d_1X, \; \succ_v, \; f^\d)\; \stackrel{\mathbf{??}}{\longrightarrow} \;
 (X,\; \mathcal F(v),\; f),
 \end{eqnarray}
 where $\succ_{v}$ denotes the partial order on boundary, defined by the causality map $C_v$, and the symbol ``$\stackrel{\mathbf{??}}{\longrightarrow}$" points to the unknown ingredients of the diagrams.\smallskip

The second kind of problem is: {\sf Given two manifolds,  $X_1$ and $X_2$, equipped with traversing flows, and a diffeomorphism $\Phi^\d$ of their boundaries, respecting the relevant boundary data, is it possible to extend $\Phi^\d$ to a diffeomorphism/homeomorphism $\Phi: X_1 \to X_2$  that respects the corresponding flows-generated structures in the interiors of the two manifolds?} 

\noindent This problem may be represented by the commutative diagrams:
\begin{eqnarray}\label{3_Extension Holo}
\bullet  \text{\sf Holographic Extension Problem \quad} \nonumber \\ 
 (\d_1X_1, \; \succ_{v_1})\; \stackrel{\mathsf{inc}}{\longrightarrow} \; (X_1,\; \mathcal F(v_1))\nonumber \\ 
 \quad \downarrow \; \Phi^\d  \quad \quad  \quad \quad \quad  \quad \downarrow \; ?? \;\; \Phi \quad 
 \\
 (\d_1X_2, \; \succ_{v_2})\; \stackrel{\mathsf{inc}}{\longrightarrow} \; (X_2,\; \mathcal F(v_2)) \nonumber \\
 \nonumber \\
 %%%%%%%%%
(\d_1X_1, \; \succ_{v_1},\; f_1^\d)\; \stackrel{\mathsf{inc}}{\longrightarrow} \; (X_1,\; \mathcal F(v_1), f_1)\nonumber \\ 
 \quad \downarrow \; \Phi^\d  \quad \quad  \quad \quad \quad  \quad \downarrow \; ?? \;\; \Phi \quad 
 \\
 (\d_1X_2, \; \succ_{v_2}, \; f_2^\d)\; \stackrel{\mathsf{inc}}{\longrightarrow} \; (X_2,\; \mathcal F(v_2),\; f_2),\nonumber 
 \end{eqnarray}
 where $\mathsf{inc}$ denotes the inclusion of spaces, accompanied by the obvious restrictions of functions and foliations. The symbol ``$\downarrow \; ?? \;\;$ " indicates the unknown maps in the diagrams.
 
 These two types of problems come in a big variety of flavors, depending on the more or less rich boundary data and on the anticipated quality of the transformations $\Phi$ (homeomorphisms, $\mathsf{PD}$-homeomorphisms, H\"{o}lder homeomorphisms with some control of the H\"{o}lder exponent, and diffeomorphisms with different degree of smoothness).
\smallskip

Let us formulate the main result of \cite{K4}, Theorem 4.1, which captures the philosophy of this article and puts our main result, Theorem \ref{th.main_alg}, in the proper context. Theorem \ref{th.main} reflects the scheme depicted in (\ref{3_Extension Holo}).

\begin{theorem}{\bf(Conjugate Holographic Extensions)}\label{th.main} Let $X_1, X_2$ be compact connected oriented smooth $(n+1)$-dimensional manifolds with boundaries. Consider two traversing boundary generic vector fields $v_1, v_2$ on $X_1$ and $X_2$, respectively. % and a pair of contact structures $\xi_1, \xi_2$ such that $v_i$ ($i=1,2$) is the Reeb vector field for a  contact  $1$-form $\b_i$ on $X_i$ that generates $\xi_i$. %Assume that  $(d\b_i)^n |_{\mathsf{int}(\d^+X_i(v_i))} > 0$, while $(d\b_i)^n |_{\mathsf{int}(\d^-X_i(v_i))} < 0$. 
In addition, assume that $v_1, v_2$ have Property $\mathsf A$ from Definition \ref{def.property_A}.
\smallskip

Let a smooth orientation-preserving diffeomorphism $\Phi^\d: \d_1X_1 \to \d_1X_2$ commute with the two causality maps:
$$C_{v_2} \circ \Phi^\d = \Phi^\d \circ C_{v_1}$$
%and let {\bf unnecessary ????}%preserve the sections, i.e., 
%$$(\Phi_\ast^\d \oplus \mathbf 1^\nu)(\xi_1^\d) = \xi_2^\d.$$

Then $\Phi^\d$ extends to a smooth orientation-preserving diffeomorphism $\Phi: X_1 \to X_2$ such that $\Phi$ maps the oriented foliation $\mathcal F(v_1)$ to the oriented foliation $\mathcal F(v_2)$. % and its differential $\Phi_\ast$ maps the contact structure $\xi_1$ to a contact structure $\xi^\dagger_2$ that is $\mathcal{RC}$-equivalent to  $\xi_2$.
\end{theorem}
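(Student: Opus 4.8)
The plan is to reconstruct $\Phi$ one trajectory at a time, after first transporting the problem down to the trajectory spaces, and to postpone the smoothness verification to the tangency locus, where Property $\mathsf A$ is meant to do the decisive work.

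\textbf{Step 1 (Descent to trajectory spaces).} First I would observe that the fibers of $\Gamma^\d_i := \Gamma_i|_{\d_1 X_i}$ are exactly the $\sim_{v_i}$-classes of the relation ``lying on a common $v_i$-trajectory'', and that these are recovered by $C_{v_i}$: starting from the entry point $x_0$ of a trajectory $\g$ and iterating $C_{v_i}$ visits, in the order prescribed by $\succ_{v_i}$, all the points $x_0 \prec x_1 \prec \dots \prec x_m$ of $\g \cap \d_1 X_i$ (the interior tangency points lie in both $\d_1^+X_i(v_i)$ and $\d_1^-X_i(v_i)$, so $C_{v_i}$ can be applied repeatedly), stabilising at the exit $x_m$. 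Hence the hypothesis $C_{v_2}\circ \Phi^\d = \Phi^\d \circ C_{v_1}$ says precisely that $\Phi^\d$ carries $\sim_{v_1}$-classes to $\sim_{v_2}$-classes and respects their $\succ$-orderings. Consequently $\Phi^\d$ descends to a bijection $\bar\Phi:\mathcal T(v_1)\to\mathcal T(v_2)$ with $\bar\Phi\circ\Gamma^\d_1=\Gamma^\d_2\circ\Phi^\d$; as both trajectory spaces are compact Hausdorff and the $\Gamma^\d_i$ are continuous, $\bar\Phi$ is a homeomorphism. Because $\Phi^\d$ preserves $\succ_v$, it sends the minimum (entry) and maximum (exit) of each fiber to the entry and exit of the image trajectory, so trajectories are matched as \emph{oriented} segments.

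\textbf{Step 2 (A smooth product model).} Next I would pass to the germs $(\hat X_i,\hat v_i,\hat f_i)$. For the extended flows on the boundaryless $\hat X_i$ there are no tangencies to contend with, so the local trajectory spaces are genuine smooth $n$-manifolds near the image of $X_i$, and $\mathcal F(\hat v_i)$ trivialises: using $\hat f_i$ as the longitudinal coordinate identifies a neighbourhood of $X_i$ in $\hat X_i$ with an open piece of such a base manifold times $\R$. In this model $\d_1 X_i$ is the graph of the multivalued ``boundary-time'' function encoded by $C_{v_i}$, and $X_i$ is the region it bounds. Over the open stratum where $\Gamma_i$ is a submersion, $\bar\Phi$ is smooth and extends to a diffeomorphism of neighbourhoods of the two base manifolds; the commutation relation of Step 1 is exactly the assertion that this base diffeomorphism carries one boundary-time graph onto the other.

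\textbf{Step 3 (Lifting and the longitudinal match).} I would then lift $\bar\Phi$ to the total spaces by covering it with a fibrewise reparametrisation $\rho_\g:\g_1\to\g_2$ of each trajectory, chosen to depend smoothly on the trajectory and to carry the marked set $\g_1\cap\d_1 X_1$ onto $\g_2\cap\d_1 X_2$ so that it restricts to $\Phi^\d$ at the endpoints and at the interior tangency marks. Over the open stratum where trajectories meet $\d_1 X$ transversally this is a routine parametrised isotopy-extension construction for intervals with matched marked points, and it yields a homeomorphism $\Phi:X_1\to X_2$ sending $\mathcal F(v_1)$ to $\mathcal F(v_2)$, which is a diffeomorphism off the tangency locus and restricts to $\Phi^\d$ on $\d_1 X_1$.

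\textbf{Step 4 (Smoothness at the tangency locus --- the main obstacle).} The essential difficulty sits over the quadratic-tangency locus $\d_2 X_i$, where two boundary marks on a trajectory collide, $\Gamma_i$ ceases to be a submersion, and $C_{v_i}$ is genuinely discontinuous, so that a naive longitudinal match would develop corners. Here I would invoke the boundary-generic normal forms for $(X_i,v_i,\d_1 X_i)$: Property $\mathsf A$ guarantees that every tangency is quadratic and isolated along its trajectory, hence each such point has model coordinates in which $v_i=\d/\d t$, $\d_1 X_i$ is a fixed quadratic graph, and $C_{v_i}$ acquires a standard square-root singularity. The commutation $C_{v_2}\circ\Phi^\d=\Phi^\d\circ C_{v_1}$ forces $\Phi^\d$ to intertwine these two square-root singularities, and a short normal-form computation then shows that the reparametrisation $\rho_\g$ of Step 3 may be chosen smooth across the collision, so that $\Phi$ is a local diffeomorphism at points of $\d_2 X_i$ as well. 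I expect this normal-form matching to be the crux of the argument. Finally, orientation-preservation of $\Phi$ follows from that of $\Phi^\d$ together with the preservation of the flow orientation established in Step 1, completing the proof.
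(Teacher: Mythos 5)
Your Step 1 is sound and coincides with the paper's own first move (descending $\Phi^\d$ to a homeomorphism $\Phi^{\mathcal T}: \mathcal T(v_1) \to \mathcal T(v_2)$ of trajectory spaces), but the crux you place in Step 4 rests on a misreading of Property $\mathsf A$. Definition \ref{def.property_A} does \emph{not} say that every tangency is quadratic: a boundary generic field admits trajectories tangent to $\d_1 X$ to any order up to $\dim(X)$, and Property $\mathsf A$ only requires that each trajectory either meets $\d_1 X$ transversally at \emph{some} point of $\g \cap \d_1 X$, or has $\g \cap \d_1 X$ a singleton with quadratic tangency. Consequently your normal-form matching of ``square-root singularities'' of the causality maps does not cover the tangencies that actually occur (cubic and higher ones are generically present for $n \geq 2$), and the step where you expect the decisive work to happen collapses. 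The paper uses Property $\mathsf A$ in the opposite direction: the guaranteed transversal crossing on each trajectory furnishes a smooth local section of the trajectory space sitting inside $\d_1 X$, along which the smoothness of $\Phi^\d$ transfers to smooth dependence of the image trajectory $\Gamma_2^{-1}(\Phi^{\mathcal T}(\g_x))$ on $x$; smoothness of $\Phi$ near the tangency loci then follows from the transversality of the two foliations $\mathcal F(\hat v_i)$ and $\mathcal G(\hat f_i)$, with no singularity analysis at $\d_2 X(v_i)$ at all.

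The second, and related, gap is the absence of the paper's key global mechanism: pulling back a Lyapunov function. Your Step 3 asks for fibrewise reparametrisations $\rho_\g$ ``depending smoothly on the trajectory'', but the marked sets $\g \cap \d_1 X_i$ change cardinality discontinuously as $\g$ varies (marks collide at tangencies and disappear), so no routine parametrised isotopy-extension for marked intervals produces a coherent global choice; you defer exactly this difficulty to the flawed Step 4. The paper avoids it entirely by setting $f_1^\d := (\Phi^\d)^\ast(f_2^\d)$, which is strictly monotone along the boundary traces of $v_1$-trajectories precisely because $\Phi^\d$ intertwines the causality maps, and by invoking the hard extension result of \cite{K4}: such an $f_1^\d$ extends to a smooth $f_1: X_1 \to \R$ with $df_1(v_1) > 0$. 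Then $\Phi(x)$ is defined outright as the unique intersection of the trajectory $\Gamma_2^{-1}(\Phi^{\mathcal T}(\g_x))$ with the leaf $f_2^{-1}(f_1(x))$, so the longitudinal match is automatic, globally consistent, and restricts to $\Phi^\d$ on the boundary --- no trajectory-by-trajectory gluing is needed. Relatedly, the product model of your Step 2 is unwarranted as stated: $\mathcal T(v_i)$ is in general not a manifold (it can be a graph with trivalent vertices, as in Fig.~1), so no neighbourhood of $X_i$ in $\hat X_i$ is foliated-diffeomorphic to an open piece of a smooth base manifold times $\R$; the paper instead works with the transversal pair of foliations as a coordinate grid and with the merely topological embedding $\a: X \to \mathcal T(v) \times \R$.
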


Let us outline the spirit of Theorem \ref{th.main}'s proof, since this will clarify the main ideas from Section 3. The reader interested in the technicalities may consult \cite{K4}. 

\begin{proof} %Our arguments rely heavily on the Holography Theorem from \cite{K4}. %Assuming Property A, it claims the existence of an orientation-preserving diffeomorphism $\Phi: X_1 \to X_2$ that extends $\Phi^\d$ and maps the $v_1$-oriented foliation $\mathcal F(v_1)$ to the $v_2$-oriented foliation $\mathcal F(v_2)$. 
First, using that $v_2$ is traversing, we construct a Lyapunov function $f_2: X_2 \to \R$ for $v_2$. %Using the contact form $\b_2$, by Lemma \ref{lem.df(v)=1}, we may assume that $df_2(v_2) = 1$. 
Then we pull-back, via the diffeomorphism $\Phi^\d$, the restriction $f_2^\d := f_2|_{\d_1 X_2}$ to the boundary $\d_1 X_2$. Since  $\Phi^\d$ commutes with the two causality maps, the pull back $f_1^\d =_{\mathsf{def}} (\Phi^\d)^\ast(f^\d_2)$ has the property $f_1^\d(y) > f_1^\d(x)$ for any pair $y \succ x$ on the same $v_1$-trajectory, the order of points being defined by the $v_1$-flow. Equivalently, we get $f_1^\d(C_{v_1}(x)) > f_1^\d(x)$ for any $x \in \d_1^+X(v_1)$ such that $C_{v_1}(x) \neq x$. As the key step, we prove in \cite{K4} that such $f_1^\d$ extends to a smooth function $f_1: X_1 \to \R$ that has the property $df_1(v_1) > 0$. Hence, $f_1$ is a Lyapunov function for $v_1$. \smallskip

Recall that each causality map $C_{v_i}$, $i=1, 2$,  allows to view the $v_i$-trajectory space $\mathcal T(v_i)$ as the quotient space $(\d_1X_i)\big/ \{C_{v_i}(x) \sim x\}$, where $x \in \d_1^+X_i(v_i)$ and the topology in $\mathcal T(v_i)$ is defined as the quotient topology. Using that $\Phi^\d$ commutes with the causality maps $C_{v_1}$ and $C_{v_2}$, we conclude that $\Phi^\d$ induces a homeomorphism $\Phi^\mathcal T: \mathcal T(v_1) \to \mathcal T(v_2)$ of the trajectory spaces, which preserves their natural stratifications.\smallskip

For a traversing $v_i$, the manifold $X_i$ carries two mutually transversal  foliations: the oriented $1$-dimensional $\mathcal F(v_i)$, generated by the $v_i$-flow, and the foliation $\mathcal G(f_i)$, generated by the constant level hypersurfaces of the Lyapunov function $f_i$. To avoid dealing the singularities of $\mathcal F(v_i)$ and $\mathcal G(f_i)$, we extend $f_i$ to $\hat f_i: \hat X_i \to \R$ and $v_i$ to $\hat v_i$ on $\hat X_i$ so that $d\hat f_i(\hat v_i) > 0$. This generates nonsingular foliations $\mathcal F(\hat v_i)$ and $\mathcal G(\hat f_i)$ on $\hat X_i$. By this construction, $\mathcal F(\hat v_i)|_{\hat X_i} = \mathcal F(v_i)$ and $\mathcal G(\hat f_i)|_{X_i} = \mathcal G(f_i)$. 
Note that the ``leaves" of $\mathcal G(f_i)$ may be disconnected, while the leaves of $\mathcal F(v_i)$, the $v_i$-trajectories, are connected.  The two smooth foliations, $\mathcal F(\hat v_i)$ and $\mathcal G(\hat f_i)$, will serve as a ``coordinate grid" on $X_i$: every point $x \in X_i$ belongs to a \emph{unique} pair of leaves $\g_x \in \mathcal F(v_i)$ and $L_x := \hat f_i^{-1}(f_i(x)) \in \mathcal G(\hat f_i)$. 

Conversely, using the traversing nature of $v_i$, any pair $(y,\,  t)$, where $y \in \g_x \cap \d_1X_i$ and $t \in [f_i^\d(\g_x \cap \d_1X_i)] \subset \R$, where $[f_i^\d(\g_x \cap \d_1X_i)]$ denotes the minimal closed interval that contains the finite set $f_i^\d(\g_x \cap \d_1X_i)$, determines a \emph{unique} point $x \in X_i$. Note that some pairs of leaves $L$ and $\g$ may have an empty intersection, and some components of leaves $L$ may have an empty intersection with the boundary $\d_1X_i$.  

In fact, using that $f_i$ is a Lyapunov function, the hyprsurface $L = f_i^{-1}(c)$ intersects with a $v_i$-trajectory $\g$ if and only if $c \in [f_i^\d(\g \cap \d_1X_i)]$. Since the two smooth leaves, $\hat\g_y$ and $\hat f_i^{-1}(f_i(z))$, depend smoothly on the points $y, z \in \d_1X_i$ and are transversal, their intersection point $\hat\g_y \cap \hat f_i^{-1}(f_i(z)) \in \hat X_i$ depends smoothly on $(y, z) \in (\d_1X_i) \times (\d_1X_i)$, as long as $f_i^\d(z) \in [f_i^\d(\g_y \cap \d_1X_i)]$. %, or even as long as $\hat f_i(\hat \g_z) \in f_i^\d([\g_y \cap \d_1X_i])$.
Note that pairs $(y, z)$, where $y, z \in \d_1X_i$, with the property $f_i^\d(z) \in f_i^\d(\g_y \cap \d_1X_i)$ give rise to the intersections $\hat\g_y \cap \hat f_i^{-1}(f_i(z))$ that belong to $\d_1X_i$.

%{\bf WRONG} Therefore, the knowledge of  traces of the foliations $\mathcal F(v_i)$ and $\mathcal G(f_i)$ on $\d_1X_i$, together with the trace $f_i^\d: \d_1X_i \to \R$ of the Lyapunov function, is sufficient for reconstructing the bulk $X_i$ and the two foliations $\mathcal F(v_i)$ and $\mathcal G(f_i)$ on it, up to a smooth homeomorphism (a smooth diffeomorphism when Property A is available {\bf ????} Convex envelops ????) which is the identity map on $\d_1X_i$ (see Theorem \ref{cor.main_reconstruction}).
\smallskip

Now we are ready to extend the diffeomorphism $\Phi^\d$ to a homeomorphism $\Phi: X_1 \to X_2$. 
In the process, following the scheme in (\ref{3_Extension Holo}), \emph{we assume the the foliations $\mathcal F(v_i)$ and of the Lyapunov functions $f_i$ on $X_i$ ($i= 1, 2$) do exist and are ``knowable",  although we have access only to their traces on the boundaries}. 

Take any $x \in X_1$. It belongs to a unique pair of leaves $L_x  \in \mathcal G(f_1)$ and $\g_x \in \mathcal F(v_1)$. We define $\Phi(x) = x' \in X_2$, where $x'$ is the unique point that belongs to the intersection of $f_2^{-1}(f_1(x)) \in \mathcal G(f_2)$ and the $v_2$-trajectory $\g' = \Gamma_2^{-1}(\Phi^\mathcal T(\g_x))$. %= \g_{\Phi^\d(y)}$ for any $y \in \g_x \cap \d_1X_1$. CHECK
By its construction, $\Phi |_{\d_1X_1} = \Phi^\d$. Therefore, $\Phi$ induces the same homeomorphism $\Phi^{\mathcal T}: \mathcal T(v_1) \to \mathcal T(v_2)$ as $\Phi^\d$ does. 

The leaf-hypersurface $\hat f_2^{-1}(f_1(x))$ depends smoothly on $x$, but the leaf-trajectory $\hat \g' = \Gamma_2^{-1}(\Phi^\mathcal T(\hat \g_x))$ may not!
 Although the homeomorphism $\Phi$ is a diffeomorphism along the $v_1$-trajectories, it is not clear that it is a diffeomorphism on $X_1$ (a priori, $\Phi$ is just a H\"{o}lder map with a H\"{o}lder exponent $\a = 1/m$, where $m$ is the maximal tangency order of $\g$'s to $\d_1X$). Presently, for proving that $\Phi$ is a diffeomorphism, we need  Property $\mathsf A$ from Definition \ref{def.property_A}. Assuming its validity, we use the transversality of $\g_x$ \emph{somewhere} to $\d_1X$ to claim the smooth dependence of $\Gamma_2^{-1}(\Phi^\mathcal T(\hat \g_x))$ on $x$. Now, since the smooth foliations $\mathcal F(\hat v_i)$ and $\mathcal G(\hat f_i)$ are transversal, it follows that $x' = \Phi(x)$ depends smoothly on $x$. Conjecturally, Property $\mathsf A$ is unnecessary for establishing that $\Phi$ is a diffeomorphism. \end{proof}
% \smallskip
 
Note that this construction of the extension $\Phi$ is quite explicit, but not canonic. For example, it depends on the choice of extension of $f_1^\d := (\Phi^\d)^\ast(f_2^\d)$ to a smooth function $f_1: X_1 \to \R$, which is strictly monotone along the $v_1$-trajectories. The uniqueness (topological rigidity) of the extension $\Phi$ may be achieved, if one assumes \emph{knowing fully} the manifolds $X_i$, equipped with the foliation grids $\mathcal F(v_i), \mathcal G(f_i)$ and the Lyapunov function $f_i$. In  Theorem \ref{th.main_alg},  we will reflect on this issue. 
\smallskip  

The next theorem (see \cite{K4}, Corollary 4.3) fits the scheme in (\ref{3_Reconstruction Holo}). It claims that the \emph{smooth topological type} of the triple $\{X, \mathcal F(v), \mathcal G(f)\}$ may be reconstructed from the appropriate boundary-confined data, provided that Property $\mathsf A$ is valid.

%%%%
\begin{corollary}{\bf(Holography of Traversing Flows)}\label{cor.main_reconstruction} Let $X$ be a compact connected smooth $(n+1)$-dimensional manifold with boundary, and let $v$ be a traversing boundary generic vector field, which possesses  Property $\mathsf A$. %We  assume that $v$ is the Reeb vector field for a %cooriented 
%contact form $\b$. Using $\b$, by Lemma \ref{lem.df(v)=1}, we construct a Lyapunov function $f: X \to \R$ such that $df(v) = 1$.
%is $v$-invariant, 
%and that $(d\b)^n |_{\mathsf{int}(\d^+X(v))} > 0$, while $(d\b)^n |_{\mathsf{int}(\d^-X(v))} < 0$.
\smallskip

Then the following boundary-confined data: 
\begin{itemize}
\item the causality map $C_v: \d_1^+X(v) \to \d_1^-X(v)$,
\item  the restriction $f^\d: \d_1 X \to \R$ of the Lyapunov function $f$,% {\bf ????}
%\item the restriction $\b^\d$ of the contact form $\b$ to $T^\ast X|_{\d_1X}$% the section $\xi^\d$ (better, the restriction $\b^\d$ of the contact form $\b$)
\end{itemize}
are sufficient for  reconstructing the triple $(X, \mathcal F(v), f)$, %$(X, \mathcal F(v), \mathcal G(f), \xi)$
up to  diffeomorphisms $\Phi: X \to X$ which are the identity on the boundary $\d_1 X$. % and whose differentials are the identity maps on the bundle $TX|_{\d_1 X} \to \d_1 X$.
\end{corollary}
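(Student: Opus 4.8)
The plan is to derive the corollary from Theorem~\ref{th.main} by specializing that extension result to the case $X_1 = X_2 = X$ with $\Phi^\d = \mathrm{id}_{\d_1 X}$, while supplying an explicit recipe that rebuilds the bulk from the boundary data, thereby realizing the reconstruction scheme~(\ref{3_Reconstruction Holo}). The statement is really a rigidity assertion: the pair $(C_v, f^\d)$ pins down $(X, \mathcal F(v), f)$ up to a boundary-fixing self-diffeomorphism.

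First I would reconstruct the topological skeleton. From the causality map alone, the trajectory space is recovered as the quotient $\mathcal T(v) = \d_1 X \big/ \{x \sim C_v(x)\}$ equipped with the quotient topology, and its stratification by fiber cardinality is determined by $C_v$. Next, invoking $f^\d$, for each trajectory $\g$ (a point of $\mathcal T(v)$) I would form the minimal closed interval $[f^\d(\g \cap \d_1 X)]$ spanned by the finitely many boundary values of $f^\d$ along $\g$, exactly as in the ``converse'' paragraph of the proof of Theorem~\ref{th.main}. The reconstructed space is then the set of pairs $(\g, t)$ with $t \in [f^\d(\g \cap \d_1 X)]$; projection onto the first factor recovers the foliation $\mathcal F(v)$, the coordinate $t$ recovers the Lyapunov function $f$, and the boundary is recovered as those pairs with $t \in f^\d(\g \cap \d_1 X)$. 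This delivers the point set, the foliation, and $f$ as a single topological package.

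The main obstacle is upgrading this topological reconstruction to a \emph{smooth} one. A priori the assignment sending a trajectory to the span of its boundary data is only Hölder near points where $\g$ is tangent to $\d_1 X$, so the naive reconstruction yields only a Hölder structure. This is precisely where Property~$\mathsf A$ is indispensable: because each trajectory is transversal to $\d_1 X$ \emph{somewhere}, and at worst quadratically tangent at an isolated point, the mechanism in the proof of Theorem~\ref{th.main} shows that $\g$ depends smoothly on that transversal boundary point, and the transversality of the two foliations $\mathcal F(\hat v)$ and $\mathcal G(\hat f)$ then endows the reconstructed bulk with the correct smooth structure.

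Finally, for the uniqueness clause I would apply Theorem~\ref{th.main} directly. Given two smooth triples $(X_i, \mathcal F(v_i), f_i)$, $i = 1,2$, realizing the same boundary data under an identification $\d_1 X_1 \cong \d_1 X_2$, take $\Phi^\d$ to be that identification (orientation-preserving, e.g.\ the identity). The hypothesis $C_{v_2} \circ \Phi^\d = \Phi^\d \circ C_{v_1}$ holds because the causality maps agree, so Theorem~\ref{th.main} yields a diffeomorphism $\Phi: X_1 \to X_2$ extending $\Phi^\d$ with $\Phi_\ast \mathcal F(v_1) = \mathcal F(v_2)$. Since the extension in that proof is built by matching the $f_1$-levels to the $f_2$-levels, it automatically satisfies $f_2 \circ \Phi = f_1$, so $\Phi$ respects the Lyapunov function as well. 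Taking the identification to be the identity on a single $\d_1 X$ produces a self-diffeomorphism of $X$ that is the identity on $\d_1 X$ and respects $(\mathcal F(v), f)$, which is exactly the rigidity claimed.
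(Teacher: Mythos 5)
Your final paragraph is exactly the paper's proof: assume two triples $(X_i,\mathcal F(v_i),f_i)$, $i=1,2$, induce the same boundary data $\{C_v,\, f^\d\}$ on a common boundary $\d_1X$, apply Theorem~\ref{th.main} to $\Phi^\d=\mathsf{id}_{\d_1X}$, and conclude that the resulting extension $\Phi\colon X_1\to X_2$ matches the foliations. Your added observation that in the proof of Theorem~\ref{th.main} one may take the \emph{given} $f_1$ as the Lyapunov extension of $(\Phi^\d)^\ast(f_2^\d)$, so that the level-matching construction yields $f_2\circ\Phi=f_1$ on the nose, is correct and in fact makes the treatment of the Lyapunov function more explicit than the paper's terse claim. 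The one place where you overreach is the middle, ``constructive'' portion. The paper itself warns (in the remark following the corollary, and in the discussion of the embedding $\a\colon X\hookrightarrow \mathcal T(v)\times\R$) that the explicit model $\{(\g,t)\,:\, t\in [f^\d(\g\cap\d_1X)]\}$ recovers $(X,\mathcal F(v),f)$ only up to \emph{homeomorphism}, and that a genuinely constructive smooth reconstruction from $(\d_1X, C_v, f^\d)$ alone is open. Your assertion that Property~$\mathsf A$ ``endows the reconstructed bulk with the correct smooth structure'' transplants a step from the proof of Theorem~\ref{th.main} that relies on the ambient smooth structures of the two bulks (the transversal grids $\mathcal F(\hat v_i)$, $\mathcal G(\hat f_i)$ in $\hat X_i$); it does not by itself produce a smooth atlas on the abstract quotient model, whose base $\mathcal T(v)$ is a singular space. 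Since the corollary is only a rigidity statement, this excursion is dispensable and does not invalidate your proof, but it should be deleted or downgraded to a heuristic remark, as written it claims more than is established.
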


\begin{proof} %{\bf WORK} 
%By Theorem \ref{th.main}, being applied to $\Phi^\d = \mathsf{id_{\d_1X}}$, 
We claim that, in the presence of Property $\mathsf A$, the data $\{C_v,\; \; f^\d\}$ 
on the boundary $\d_1X$ allow for a reconstruction of the triple $(X, \mathcal F(v), f)$, up to a diffeomorphism that is the identity on $\d_1X$. 

Assume that there exist two traversing flows $(X_1, \mathcal F(v_1), f_1)$ and $(X_2, \mathcal F(v_2), f_2)$ such that $\d_1X_1 = \d_1X_2 = \d_1X$,  $$\{C_{v_1},\; f_1^\d\} = \{C_{v_2} ,\; \; f_2^\d\} = \{C_{v}, f^\d\}.$$  Applying Theorem \ref{th.main} to the identity diffeomorphism $\Phi^\d = \mathsf{id_{\d_1 X}}$, we conclude that it extends to a diffeomorphism $\Phi: X_1 \to X_2$ that takes $\{\mathcal F(v_1) \cap \d_1X_1,\; \; f_1^\d\}$ to $\{\mathcal F(v_2) \cap \d_1X_2,\; \; f_2^\d\}$.
\end{proof}

\begin{remark}
\emph{Unfortunately, Corollary \ref{cor.main_reconstruction} and its proof are not very constructive. They are just claims of existence: at the moment, it is not clear how to build the triple $(X, \mathcal F(v), f)$ only from the boundary data $(\d_1X, C_v, f^\d)$. \hfill $\diamondsuit$}
\end{remark}

\begin{figure}[ht]
\centerline{\includegraphics[height=2in,width=3in]{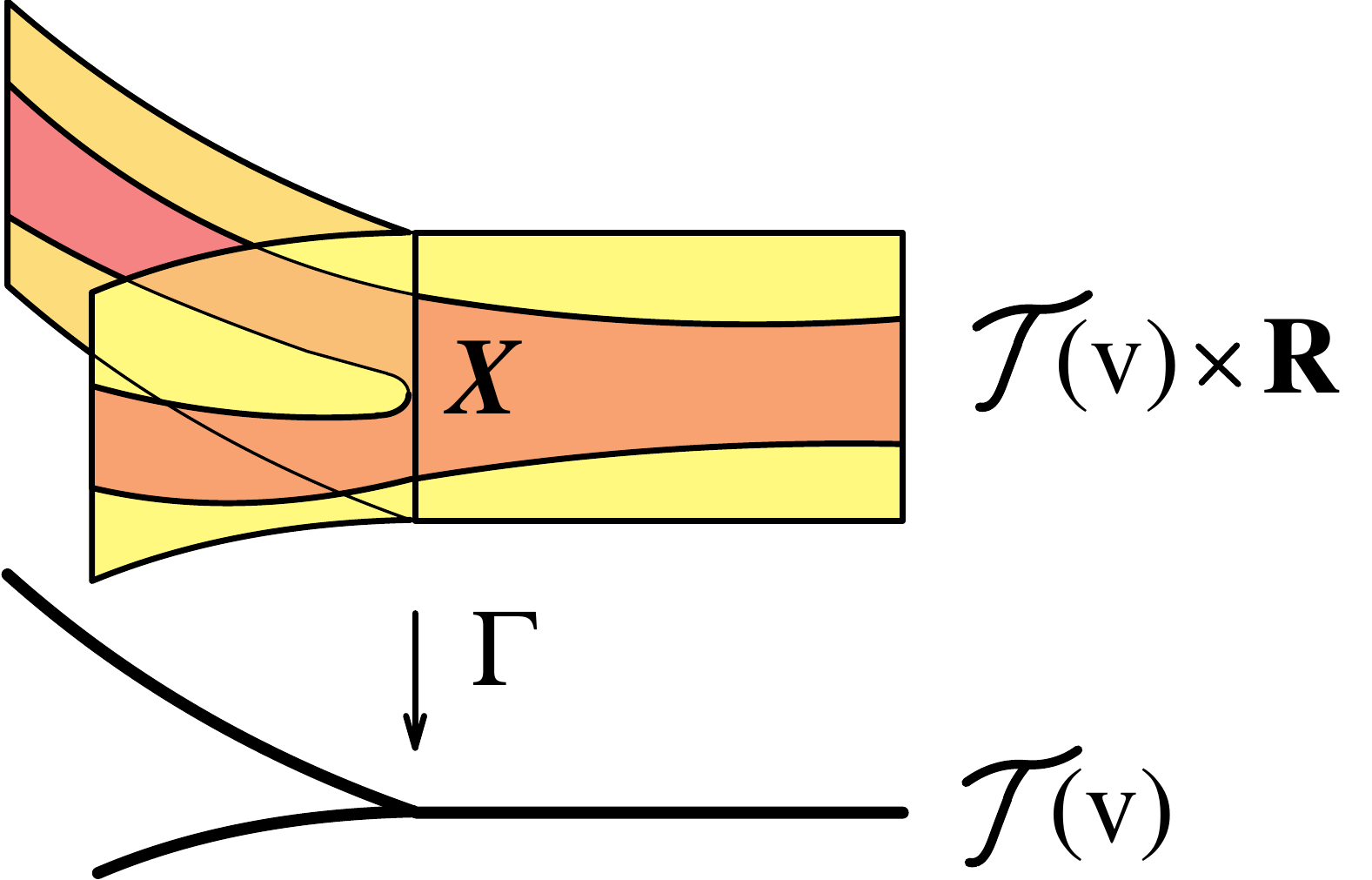}}
\bigskip
\caption{\small{Embedding $\a: X \to \mathcal T(v) \times \R$, produced by $\Gamma: X \to \mathcal T(v)$ and $f: X \to \R$.}}
\label{fig.AAAA}
\end{figure}

Fortunately, the following simple construction (\cite{K4}, Lemma 3.4), shown in Fig.\ref{fig.AAAA}, produces an explicit recipe for recovering the triple $(X, \mathcal F(v), f)$ from the triple $(\d_1X, C_v, f^\d)$, but only up to a \emph{homeomorphism}.  \smallskip

%Each leaf-trajectory $\g \in \mathcal F(v)$ carries a unique vector field $v$, subject to the constraint $df(v) =1$. 

As we have seen in the proof of Theorem \ref{th.main}, the causality map $C_v$ determines the quotient trajectory space $\mathcal T(v)$ canonically. Let $f: X \to \R$ be a Lyapunov function for $v$.\smallskip

 The pair $(\mathcal F(v), f)$ gives rise to an embedding $\a: X \hookrightarrow \mathcal T(v) \times \R$, defined by the formula $\a(x) = ([\g_x], f(x))$, where $x \in X$ and $[\g_x] \in \mathcal T(v)$ denotes the point-trajectory through $x$. The dependece $x \leadsto [\g_x]$ is continuous by the definition of the quotient topology in $ \mathcal T(v)$.  Note that $\a$ maps each $v$-trajectory $\g$ to the line $[\g] \times \R$, and, for any $c \in \R$, each (possibly disconnected) leaf $\mathcal G_c :=  f^{-1}(c)$ to the slice $\mathcal T(v) \times c$ of $\mathcal T(v) \times \R$.  With the the help of the embedding $\a$, each trajectory $\g \in \mathcal F(v)$ may be identified with the closed interval $[f^\d(\g \cap \d_1X)] \subset \R$, and the vector field $v|_\g$ with the constant vector field $\d_u$ on $\R$.\smallskip
%Here $\phi: \mathcal T(v) \times \R \to \R$ denotes the obvious projection.

Consider now the restriction $\a^\d$ of the embedding $\a$ to the boundary $\d_1 X$. Evidently, the image of $\a^\d: \d_1 X \hookrightarrow \mathcal T(v) \times \R$ bounds  the image $\a(X) = \coprod_{[\g] \in \mathcal T(v)} [f^\d(\g \cap \d_1X)]$. Therefore, using the product structure in $\mathcal T(v) \times \R$, $\a^\d(\d_1 X)$ determines $\a(X)$ canonically. Hence, $\a(X)$ depends on $C_{v}$ and $f_1^\d$ only! Note that $\a$ is a continuous $1$-to-$1$ map on a compact space, and thus, a homeomorphism onto its image. Moreover, the topological type of $X$ depends only on $C_{v}$: the apparent dependence of $\a(X)$ on $f^\d$ is not crucial, since, for a given $v$, the space 
$\mathsf{Lyap}(v)$ of Lyapunov functions for $v$  is convex.\smallskip

%{\bf WORK} 
The standing issue is: How to make sense of the claim ``$\a$ is a diffeomorphism"? Section 3 descibes  our attempt to address this question (see Lemma \ref{lem.3.8_iso} and Theorem \ref{th.main_alg}). % assuming that Property $\mathsf A$ is valid. 
\smallskip

%%%%%%%%%%%%%

\section{Recovering the algebra $C^\infty(X)$ in terms of subalgebras of $C^\infty(\d_1 X)$}

In what follows, we are inspired by the following classical property of functional algebras: for any compact smooth manifolds $X, Y$, we have an algebra isomorphism $C^\infty(X \times Y) \approx C^\infty(X) \hat\otimes C^\infty(Y)$, where $\hat \otimes$ denotes an appropriate completion of the algebraic tensor product $C^\infty(X)\otimes C^\infty(Y)$ \cite{Grot}.\smallskip

The trajectory space $\mathcal T(v)$, although a singular space, carries a surrogate smooth structure \cite{K3}. By definition, a function $h:  \mathcal T(v) \to \R$ is smooth if its pull-back $\Gamma^\ast(h): X \to \R$ is a smooth function on $X$. As a subspace of $C^\infty(X)$, the  $C^\infty(\mathcal T(v))$ is formed exactly by the smooth functions $g: X \to \R$, whose directional derivatives $\mathcal L_vg$ vanish in $X$. If $\mathcal L_v(g) = 0$ and $\mathcal L_v(h)= 0$, then $\mathcal L_v(g\cdot h) = \mathcal L_v(g) \cdot h + g\cdot \mathcal L_v(h)= 0$. Thus, $C^\infty(\mathcal T(v))$ is indeed a subalgebra of $C^\infty(X)$.
 
Note that if we change $v$ by a non-vanishing conformal factor $\l$, then $\mathcal L_vg =0$ if and only if $\mathcal L_{\l \cdot v}\, g =0$. Therefore, the algebra $C^\infty(\mathcal T(v))$ depends only on the {\sf conformal class} of $v$; in other words, on the foliation $\mathcal F(v)$.  

In the same spirit, we may talk about {\sf diffeomorphisms} $\Phi^\mathcal T: \mathcal T(v) \to  \mathcal T(v)$ of the trajectory spaces, as maps that induce isomorphisms of the algebra $C^\infty(\mathcal T(v))$. \smallskip

If two ($v$-invariant) functions from $C^\infty(\mathcal T(v))$ take different values at a point $[\g] \in \mathcal T(v)$, then they must take different values on the finite set $\g \cap \d_1X \subset \d_1 X$. Therefore, the obvious restriction homomorphism  $res_\mathcal T^\d: C^\infty(\mathcal T(v)) \to C^\infty(\d_1X)$, induced by the inclusion $\d_1X \subset X$, is a \emph{monomorphism}. We denote its image by $C^\infty(\d_1X, v)$. Thus, we get an isomorphism $res_\mathcal T^\d: C^\infty(\mathcal T(v)) \to C^\infty(\d_1X, v)$. We think of the subalgebra $C^\infty(\d_1X, v) \subset C^\infty(\d_1X)$ as an integral part of the boundary data for the holography problems we are tackling. \smallskip

Let $\pi_k: J^k(X, \R) \to X$ be the vector bundle of $k$-jets of smooth maps from $X$ to $\R$. We choose a continuous family semi-norms $|\sim |_k$ in the fibers of the jet bundle $\pi_k$ and use it to define a sup-norm $\|\sim\|_k$ for the sections of $\pi_k$.  We denote by $jet^k$ the obvious map $C^\infty(X, \R) \to J^k(X, \R)$ that takes each function $h$ to the collection of its $k$-jets $\{jet^k_x(h)\}_{x \in X}$. \smallskip

The {\sf Whitney topology}  \cite{W3} in the space $C^\infty(X) = \{h: X \to \R\}$ is defined in terms of the countable family of the norms $\{\|jet^k(h)\|_k\}_{k \in \N}$ of such sections $jet^k(h)$ of $\pi_k$.  
This topology insures the uniform convergence, on the compact subsets of $X$, of functions and their partial derivatives of an arbitrary order.  Note also that $\|jet^k(h_1 \cdot h_2)\|_k \leq \|jet^k(h_1)\|_k \cdot \|jet^k(h_2)\|_k$ for any $h_1, h_2 \in C^\infty(X)$. 

Any subalgebra $\mathcal A \subset C^\infty(X)$ inherits a topology from the Whitney topology in $C^\infty(X)$. In particular, the subalgebra $C^\infty(\mathcal T(v)) \approx C^\infty(X, v)$ does. \smallskip

As a locally convex vector spaces, %{\bf ????}, 
$C^\infty(\mathcal T(v))$ and $C^\infty(\R)$ are then nuclear (\cite{DS}, \cite{Ga}) so that the {\sf topological tensor product} $C^\infty(\mathcal T(v))\,\hat\otimes \, C^\infty(\R)$ (over $\R$) is uniquely defined as the completion of the algebraic tensor product $C^\infty(\mathcal T(v))\, \otimes \, C^\infty(\R)$ \cite{Grot}. \smallskip
\smallskip

We interpret $C^\infty(\mathcal T(v))\,\hat\otimes\, C^\infty(\R)$ as the algebra of ``smooth" functions on the product $\mathcal T(v) \times \R$ and denote it by $C^\infty(\mathcal T(v) \times \R)$. \smallskip

%{\bf WORK} %We denote by $C^\infty_\bullet(\a(X))$ the image of $C^\infty(\mathcal T(v) \times \R)$ under the restriction homomorphism, induced by the embedding $\a(X) \subset \mathcal T(v)\times \R$. {\bf ????}

\begin{lemma}\label{lem.A}
The intersection $C^\infty(\mathcal T(v)) \cap (f)^\ast(C^\infty(\R)) = \underline \R$, the space of constant functions on $X$. 
\end{lemma}

\begin{proof}
If a smooth function $h: X \to \R$ is constant on each $v$-trajectory $\g$ and belongs to $(f)^\ast(C^\infty(\R))$, then it must be constant on each connected leaf of $\mathcal G(f)$ that intersects $\g$. Thus, such $h$ is constant on the maximal closed \emph{connected} subset $A_\g \subseteq f^{-1}(f(\g))$ that contains $\g$. Each trajectory $\g$, homeomorphic to a closed interval, has an open neighborhood such that, for any trajectory $\g'$ from that neighborhood, we have $A_\g \cap A_{\g'} \neq \emptyset$. Since $X$ is connected, any pair $\g, \g'$ of trajectories may be connected by a path $\delta \subset X$. Using the compactness of $\delta$, we conclude that the function $h$ must be a constant along $\delta$. Therefore, $h$ is a constant globally. %{\bf WORK} \smallskip
\end{proof}

Let us consider two subalgebras, $f^\ast(C^\infty(\R)) \subset C^\infty(X)$ and $(f^\d)^\ast(C^\infty(\R)) \subset C^\infty(\d_1X)$, the second one is assumed to be a ``known" part of the boundary data. %GABI

\begin{lemma}\label{lem.3.7_iso} The restriction operator $H_f^\d: f^\ast(C^\infty(\R))  \to(f^\d)^\ast(C^\infty(\R))$ to the boundary $\d_1X$ is an epimorphism of algebras. If  the range $f^\d(\d_1X)$ of $f^\d$ is a connected closed interval of $\R$ (which is the case for a connected $\d_1X$), then $H_f^\d$ is an isomorphism.
\end{lemma}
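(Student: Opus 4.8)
The plan is to handle surjectivity and injectivity separately: the first is formal, while the second reduces to a single geometric fact about Lyapunov functions of traversing fields.

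First I would verify the epimorphism claim directly from the definitions. Restriction to $\d_1X$ is an algebra homomorphism, and for any $\phi \in C^\infty(\R)$ the function $\phi\circ f \in f^\ast(C^\infty(\R))$ satisfies $H_f^\d(\phi\circ f) = (\phi\circ f)|_{\d_1X} = \phi\circ f^\d$. Since every element of $(f^\d)^\ast(C^\infty(\R))$ is by definition of the form $\phi\circ f^\d$, the map $H_f^\d$ is onto, and it plainly respects the algebra operations.

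The substance of the lemma is \emph{injectivity}, and I claim it follows once one knows that, under the stated hypothesis, the two ranges coincide: $f(X) = f^\d(\d_1X)$. The key step is that a Lyapunov function of a traversing vector field attains both of its extrema on $\d_1X$. Indeed, since $v$ is traversing, every point $x\in X$ lies on a $v$-trajectory $\g_x$ whose endpoints lie on $\d_1X$, and because $df(v)>0$ the function $f$ is strictly increasing along $\g_x$; hence $f(x)$ lies between (or equals) the values taken by $f$ on the finite set $\g_x\cap\d_1X$. Consequently $\min_X f$ and $\max_X f$ are attained on $\d_1X$, so they equal $\min_{\d_1X} f^\d$ and $\max_{\d_1X} f^\d$. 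As $X$ is connected and $f$ continuous, $f(X)$ is the closed interval $[\,\min_{\d_1X} f^\d,\ \max_{\d_1X} f^\d\,]$. Now the inclusion $f^\d(\d_1X)\subseteq f(X)$ always holds, and under the hypothesis that $f^\d(\d_1X)$ is a closed interval it must coincide with $f(X)$, since it already contains both endpoints. When $\d_1X$ is connected, $f^\d(\d_1X)$ is the continuous image of a compact connected set, hence automatically a closed interval, which is exactly why the parenthetical remark in the statement holds.

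With $f(X) = f^\d(\d_1X)$ in hand, injectivity is immediate: if $g = \phi\circ f$ lies in $\ker H_f^\d$, then $\phi$ vanishes on $f^\d(\d_1X) = f(X)$, whence $g = \phi\circ f \equiv 0$ on all of $X$. Thus $\ker H_f^\d = \{0\}$, and together with surjectivity this shows $H_f^\d$ is an algebra isomorphism. I expect the only non-formal ingredient, and hence the main (though mild) obstacle, to be the extrema-on-boundary fact above. The interval hypothesis is genuinely necessary: for a disconnected $\d_1X$ whose boundary components have $f^\d$-ranges that leave a gap inside $f(X)$, any $\phi$ supported in that gap would yield a nonzero element of $\ker H_f^\d$, so injectivity would fail.
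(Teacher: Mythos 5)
Your proof is correct and follows essentially the same route as the paper: surjectivity is the identical formal observation, and your injectivity argument turns on exactly the fact the paper invokes, namely $f(X) = f^\d(\d_1X)$ (the paper's step ``by the hypothesis, $f(x) = f^\d(y)$ for some $y \in \d_1X$''). If anything you are more complete, since the paper asserts that equality without justification while you derive it from the traversing-flow fact that $f$ attains its extrema on $\d_1X$; your closing remark on the necessity of the interval hypothesis also matches the paper's subsequent lemma arranging $f(X) = f(\d_1X)$ by deforming the Lyapunov function.
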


\begin{proof} The restriction operator $H_f^\d$ is an algebra epimorphism,  
since any composite function $\phi \circ f^\d$, where $\phi \in C^\infty(\R)$, is the restriction to $\d_1X$ of the function $\phi \circ f$.

On the other hand, when $f^\d(\d_1X)$ is a connected subset of $\R$, we claim that $H_f^\d$ is a monomorphism. Indeed, take a function $\phi \in  C^\infty(\R)$, such that $\phi \circ f^\d \equiv 0$, but $\phi \circ f$ is not identically zero on $X$. Then there is $x \in X$ such that  $\phi \circ f(x) \neq 0$.
On the other hand, by the hypothesis, $f(x)= f^\d(y)$ for some $y\in \d_1X$. By the assumption, $f^\d \circ \phi \equiv 0$, which implies that $\phi(f^\d(y)) = 0$. This contradiction validates the claim about $H_f^\d$ being a monomorphism. Therefore, when $f^\d(\d_1X)$ is a connected interval,  $H_f^\d$ is an isomorphism of algebras.
\end{proof}

%{\bf GABI WORK}

Consider the homomorphism of algebras $$\mathsf P: C^\infty(\mathcal T(v)) \otimes f^\ast(C^\infty(\R)) \to C^\infty(X)$$ %{\bf continuous ????} 
 that takes every finite sum $\sum_i h_i \otimes (f \circ g_i)$, where $h_i \in C^\infty(\mathcal T(v)) \subset C^\infty(X)$ and $g_i \in C^\infty(\R)$, to the finite sum $\sum_i h_i \cdot (g_i \circ f) \in C^\infty(X)$. 
 
Recall that, by Lemma \ref{lem.A}, $C^\infty(\mathcal T(v)) \cap (f)^\ast(C^\infty(\R)) = \underline \R$, the constants. For any linearly independent $\{h_i\}_i$, this lemma implies that if $\sum_i h_i \cdot (g_i \circ f) \equiv 0$, then $\{g_i \circ f \equiv 0\}_i$; therefore, $\mathsf P$ is a monomorphism.\smallskip 
%For each $k$, by definition of the norms $\|\sim\|_k$, we have isometries  
%$$\|\sum_i h_i \otimes (f \circ g_i)\|_k = \sum_i \|h_i\|_k \cdot \|f \circ g_i\|_k =$$

Let us compare the, so called, {\sf projective crossnorms} $\{\|\sim\|_{k}\}_{k \in \Z_+}$ (see (\ref{eq.norms_A})) of an element $$\phi = \sum_i h_i \otimes (f \circ g_i)$$  and the norms of the element $\mathsf P(\phi) = \sum_i h_i \cdot (f \circ g_i)$. By comparing the Taylor polynomial of the product of two smooth functions with the product of their Taylor polynomials, we get that, for all $k\in \Z_+$, %{\bf CHECK ????}
\begin{eqnarray}%\label{eq.norms}
%\|\phi\|_{k, l} =_{\mathsf{def}}\; \mathsf{\inf}\big\{\sum_i \|h_i\|_k \cdot \|(f \circ g_i)\|_l\big\}    \leq \;  \|\mathsf P(\phi)\|_{k+ l},\\
\|\phi\|_{k} =_{\mathsf{def}}\; \mathsf{\inf}\big\{\sum_i \|h_i\|_k \cdot \|(f \circ g_i)\|_k\big\}     \geq \;  \|\mathsf P(\phi)\|_{k}, \label{eq.norms_A}
\end{eqnarray} 
where $\mathsf{\inf}$ is taken over all the representations of the element $\phi \in C^\infty(\mathcal T(v)) \otimes f^\ast(C^\infty(\R))$ as a sum $\sum_i h_i \otimes (f \circ g_i)$. Here we may assume that all $\{h_i\}_i$  are linearly independent elements and so are all $\{f \circ g_i\}_i$; otherwise, a simpler representation of $\phi$ is available.  \smallskip

By the inequality in (\ref{eq.norms_A}),  $\mathsf P$ is a bounded (continuous) operator. As a result, by continuity, $\mathsf P$ extends to an algebra homomorphism %{\bf ????} 
$$\hat{\mathsf P}: C^\infty(\mathcal T(v))\, \hat\otimes\, f^\ast(C^\infty(\R)) \to C^\infty(X)$$ 
whose source is the completion of the algebraic tensor product $C^\infty(\mathcal T(v)) \otimes f^\ast(C^\infty(\R))$.
%{\bf END edit /approve}

\begin{lemma}\label{lem.3.8_iso} %{\bf WORK} 
The embedding $\a: X \to \mathcal T(v) \times \R$ (introduced in the end of Section 2 and depicted in Fig. \ref{fig.AAAA}) induces an algebra epimorphism  %(isomorphism {\bf ????} )
\begin{eqnarray}\label{eq.alpha_alg}
\a^\ast: C^\infty(\mathcal T(v))\,\hat\otimes\, C^\infty(\R) \stackrel{\mathsf{id}\, \hat\otimes\, f^\ast}{\longrightarrow} C^\infty(\mathcal T(v))\,\hat\otimes\, f^\ast(C^\infty(\R)) \stackrel{\hat{\mathsf P}}{\longrightarrow} C^\infty(X).  
\end{eqnarray}
Moreover, the map $\hat{\mathsf P}$ is an \emph{isomorphism}. %????
\end{lemma}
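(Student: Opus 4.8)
The plan is to recognize the composite $\a^\ast$ as the pullback of functions along the embedding $\a = (\G, f): X \hookrightarrow \mathcal T(v) \times \R$: on decomposable tensors one checks $\a^\ast(h \otimes g) = (\G^\ast h)\cdot(g \circ f) = \mathsf P\big(h \otimes f^\ast(g)\big)$, so that under the identification $C^\infty(\mathcal T(v))\,\hat\otimes\, C^\infty(\R) = C^\infty(\mathcal T(v)\times\R)$ the map $\a^\ast = \hat{\mathsf P} \circ (\mathsf{id}\, \hat\otimes\, f^\ast)$ is exactly restriction to $\a(X)$. First I would dispose of the left arrow: since $f^\ast: C^\infty(\R) \to f^\ast(C^\infty(\R))$ is a topological quotient onto its image and all the spaces in sight are nuclear Fr\'echet, the completed projective tensor product is exact, so $\mathsf{id}\,\hat\otimes\, f^\ast$ is an epimorphism (with kernel $C^\infty(\mathcal T(v))\,\hat\otimes\, \ker f^\ast$). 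Consequently $\a^\ast$ is an epimorphism as soon as $\hat{\mathsf P}$ is surjective, and the whole lemma reduces to proving that $\hat{\mathsf P}$ is simultaneously onto and injective.

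For surjectivity of $\hat{\mathsf P}$ I would argue by extension followed by Grothendieck's theorem. Given $F \in C^\infty(X)$, I use that $\a$ is a homeomorphism onto $\a(X)$ and that, away from the tangency locus, the mutually transversal foliations $\mathcal F(v)$ and $\mathcal G(f)$ furnish local coordinates of the form $(\text{local transversal},\, f)$, i.e. $\a$ is locally a diffeomorphism onto an open box of $\mathcal T(v)\times\R$. Patching these local product charts with a partition of unity pulled back from $\mathcal T(v)$ (assembled from $v$-invariant bump functions, which exist on the compact surrogate-smooth space $\mathcal T(v)$), I would extend $F$ to a genuine smooth function $\tilde F$ on a neighborhood of $\a(X)$ in $\mathcal T(v)\times\R$, and then invoke $C^\infty(\mathcal T(v)\times\R) = C^\infty(\mathcal T(v))\,\hat\otimes\, C^\infty(\R)$ to approximate $\tilde F$ by finite sums $\sum_i h_i\otimes g_i$; restricting to $\a(X)$ and applying $\mathsf{id}\,\hat\otimes\, f^\ast$ exhibits $F$ in the image of $\hat{\mathsf P}$. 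The one delicate point is the smooth dependence of the extension on the trajectory near the boundary tangencies, where the at-most-quadratic tangency hypothesis keeps $\a$ honest enough for the local boxes to be genuinely smooth rather than merely H\"older.

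The real difficulty is injectivity of $\hat{\mathsf P}$. On the algebraic tensor product $\mathsf P$ is already a monomorphism (Lemma \ref{lem.A} together with the linear-independence argument preceding it), but a continuous monomorphism need not remain injective after completion; the estimate (\ref{eq.norms_A}) only supplies the one-sided bound $\|\mathsf P(\phi)\|_k \le \|\phi\|_k$. To upgrade this I would prove a \emph{reverse} estimate of the form $\|\mathsf P(\phi)\|_{k'} \ge c_k\,\|\phi\|_k$ with a shift $k'\ge k$ in the jet order, exhibiting $\mathsf P$ as a topological embedding; its continuous extension is then injective with closed image, and combined with the surjectivity just established and the open mapping theorem for Fr\'echet spaces this yields that $\hat{\mathsf P}$ is a topological isomorphism. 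The mechanism I would exploit is again the \emph{transversality} of $\mathcal F(v)$ and $\mathcal G(f)$: because the directions along which the factors $h_i$ (constant on trajectories) and $g_i\circ f$ (constant on $f$-levels) are respectively constant are everywhere independent, the jets of a product $\sum_i h_i\cdot(g_i\circ f)$ decouple into the separate jets of the two families, which is precisely what is needed to recover the projective crossnorm of $\phi$ from the Whitney norm of $\mathsf P(\phi)$. Making this decoupling quantitative and uniform over $X$ — in particular near the tangency locus where the coordinate grid degenerates — is, I expect, the technical heart of the proof and the step where boundary genericity must be used in earnest.
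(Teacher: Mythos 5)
Your proposal diverges from the paper's proof in both halves, and in both places the step you lean on is precisely the step you do not supply. For surjectivity, your extension argument presupposes that $\a$ is locally a diffeomorphism onto an open box in $\mathcal T(v)\times\R$, with smooth partitions of unity available on $\mathcal T(v)$. This fails along trajectories tangent to $\d_1X$: there the trajectory space is singular, the foliation grid degenerates, and boundary genericity permits tangencies of order up to $\dim(X)$ --- the ``at-most-quadratic tangency'' safeguard you invoke belongs to Property $\mathsf A$, which is \emph{not} a hypothesis of Lemma \ref{lem.3.8_iso}. A Whitney-type extension of an arbitrary $F\in C^\infty(X)$ to a neighborhood of $\a(X)$ in the singular product $\mathcal T(v)\times\R$ is essentially as strong as the lemma itself, so nothing has been reduced. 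The paper avoids extension altogether: it verifies the three hypotheses of Nachbin's theorem (nonvanishing; separation of points via $v$-invariant functions built from transversal sections $S_x$ of the $\hat v$-flow; separation of tangent vectors using the decomposition $w = av + bw^\dagger$) to conclude that the image of $\mathsf P$ is \emph{dense} in $C^\infty(X)$, from which the epimorphism claim for $\hat{\mathsf P}$ and $\a^\ast$ is drawn.

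For injectivity, you correctly diagnose that injectivity of $\mathsf P$ on the algebraic tensor product does not survive completion and that the one-sided bound (\ref{eq.norms_A}) is insufficient; but your proposed remedy --- a reverse crossnorm estimate $\|\mathsf P(\phi)\|_{k'} \geq c_k\|\phi\|_k$ uniform over $X$ --- is exactly what you defer as ``the technical heart,'' so the proof is incomplete at its decisive point, and the uniformity near the tangency locus that you flag is just where such an estimate is in doubt. The paper shows that no quantitative estimate is needed: given $\theta = \sum_{i=1}^\infty h_i\otimes (f\circ g_i)$ with $\hat{\mathsf P}(\theta)=0$, it restricts to small closed flow-box cylinders $H_x \cong D^1\times D^n$ contained in $\mathsf{int}(X)$ --- thereby staying away from all boundary tangencies --- where Grothendieck's isomorphism $C^\infty(D^n)\,\hat\otimes\,C^\infty(D^1) \cong C^\infty(H_x)$ applies verbatim; linear independence of the restricted factors, which depend on complementary groups of coordinates, forces the terms to vanish on every such cylinder, and hence $\theta = 0$ on $\mathsf{int}(X)$ and, by continuity, on $X$. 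Your transversality intuition is the right mechanism, but the soft, localized use of it (interior flow boxes plus the known isomorphism for genuine products) replaces the global uniform estimate your plan would require.
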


\begin{proof}  First, we claim that the subalgebra $\mathsf P\big(C^\infty(\mathcal T(v))\otimes f^\ast(C^\infty(\R))\big) \subset C^\infty(X)$ satisfies the three hypotheses of Nachbin's Theorem \cite{Na}. Therefore, by \cite{Na}, the $\mathsf P$-image of $C^\infty(\mathcal T(v))\otimes f^\ast(C^\infty(\R))$ is \emph{dense} in $C^\infty(X)$.  Let us validate these three hypotheses.  
\begin{enumerate}
\item \emph{For each $x \in X$, there is a function $q \in C^\infty(\mathcal T(v))\otimes f^\ast(C^\infty(\R))$ such that $q(x) \neq 0$.}\smallskip

Just take $q =  f \circ (t + c)$, where $c > \min_X f$ and $t: \R \to \R$ is the identity. \smallskip

\item \emph{For each $x, y \in X$, there is a function $q \in C^\infty(\mathcal T(v))\otimes f^\ast(C^\infty(\R))$ such that $q(x) \neq q(y)$ (i.e., the algebra $C^\infty(\mathcal T(v))\otimes f^\ast(C^\infty(\R)))$ separates the points of $X$)} \smallskip

If $f(x) \neq f(y)$, $q = f$ will do. If $f(x) = f(y)$, but $[\g_x] \neq [\g_y]$, then there is a $v$-invariant function $h \in C^\infty(\mathcal T(v))$ such that $h(x) =1$ and $h(y) = 0$.  To construct this $h$, we take a transversal section $S_x \subset \hat X$ of the $\hat v$-flow in the vicinity of $x$  such that all the $\hat v$-trajectories through $S_x$ are distinct from the trajectory $\g_y$. We pick a smooth function $\tilde h: S_x \to \R$ such that $h$ is supported in $\mathsf{int}(S_x)$, vanishes with all its derivatives along the boundary $\d S_x$, and  $\tilde h(x) =1$. Let  $\mathcal S$ denote the set of $\hat v$-trajectories through $S_x$. Of course, $\tilde h$ extends to a smooth function $h^\dagger: \mathcal S \to \R$ so that $h^\dagger$ is constant along each trajectory from $\mathcal S$. We denote by $h^\ddagger$  the obvious extension of $h^\dagger$ by the zero function. Finally, the restriction $h$ of $h^\ddagger$ to $X$ separates $x$ and $y$. 
\smallskip

\item \emph{For each $x \in X$ and $w \in T_xX$,  there is a function $q \in C^\infty(\mathcal T(v))\otimes f^\ast(C^\infty(\R))$ such that $dq_x(w) \neq 0$. }

Let us decompose $w = av +bw^\dagger$, where $a, b \in \R$ and the vector $w^\dagger$ is tangent to the hypersurfce $S_x = \hat f^{-1}(f(x))$. Then, if $a \neq 0$, then $df(w) \neq 0$. If $a=0$, then the there is a function $\tilde h: S_x \to \R$ which, with all its derivatives, is compactly supported  in the vicinity of $x$ in $S_x$ and  such that $d\tilde h_x(w^\dagger) \neq 0$.  As in the case (2), this function extends to a desired function $h \in C^\infty(\mathcal T(v))$. Now put $q = h \otimes 1$.
\end{enumerate}
\smallskip

As a result, the image of $\mathsf P: C^\infty(\mathcal T(v))\otimes f^\ast(C^\infty(\R)) \longrightarrow C^\infty(X)$  is dense. Therefore,  $\hat{\mathsf P}$ and, thus, $(\a)^\ast: C^\infty(\mathcal T(v))\, \hat \otimes \, C^\infty(\R) \longrightarrow C^\infty(X)$ are epimorphisms. 
\smallskip

Let us show that $\hat{\mathsf P}$ is also a monomorphism. Take a typical element $$\theta = \sum_{i=1}^\infty h_i \otimes (f \circ g_i) \in C^\infty(X, v) \,\hat\otimes\, f^\ast(C^\infty(\R)),$$ viewed as a sum that converges in all the norms $\|\sim \|_k$ from (\ref{eq.norms_A}).   We aim to prove that if $\hat{\mathsf P}(\theta) =  
\sum_{i=1}^\infty h_i \cdot (f \circ g_i)$ vanishes on $X$, then $\theta = 0$.

For each point $x \in \mathsf{int}(X)$, there is a small closed cylindrical solid $H_x \subset \mathsf{int}(X)$ that contains $x$ and consists of segments of trajectories through a small $n$-ball $D^n \subset f^{-1}(f(x))$, transversal to the flow. Thus, the product structure $D^1 \times D^n$ of the solid $H_x$ is given by the $v$-flow and the Lyapunov function $f: X \to \R$. 

We localize the problem to the cylinder $H_x$.
Consider the commutative diagram 
\begin{eqnarray}\label{eq.DIAGRAM}
C^\infty(X, v) \,\hat\otimes\, f^\ast(C^\infty(\R)) \stackrel{\hat{\mathsf P}}{\longrightarrow} C^\infty(X). \nonumber \\
\downarrow \mathsf{res'} \hat\otimes \mathsf{res''}   \quad \quad \quad \quad \quad \quad \quad \downarrow \mathsf{res} \nonumber \\
C^\infty(D^n)\,\hat\otimes\, C^\infty(D^1) \stackrel{\approx\;\hat{\mathsf Q}}{\longrightarrow} C^\infty(H_x),
\end{eqnarray}
where $\mathsf{res} : C^\infty(X) \to C^\infty(H_x)$ is the natural homomorphism,
$$(\mathsf{res'} \hat\otimes \mathsf{res''})\big(\sum_{i=1}^\infty h_i \otimes (f \circ g_i)\big) =_{\mathsf{def}}\; \sum_{i=1}^\infty h_i|_{D^n} \otimes (f \circ g_i)|_{D^1},$$
and $\hat{\mathsf Q}\big(\sum_{i=1}^\infty \tilde h_i \otimes \tilde{g_i}\big) =_{\mathsf{def}}\;  \sum_{i=1}^\infty \tilde h_i \cdot \tilde{g_i}$
\; for  $\tilde h_i \in C^\infty(D^n),\, \tilde g_i \in C^\infty(D^1)$. \smallskip

Since $\hat{\mathsf Q}$ is an isomorphism \cite{Grot} and $\hat{\mathsf  P}(\theta) = 0$, it follows from (\ref{eq.DIAGRAM}) that $\theta \in \ker(\mathsf{res'} \hat\otimes \mathsf{res''})$ for any cylinder $H_x$. After reshuffling terms in the sum, one may assume that all the functions $\{h_i|_{D^n} \}_i$ are linearly independent. Using that the functions $h_i|_{D^n}$ and $(f \circ g_i)|_{D^1}$ depend of the complementary groups of coordinates in $H_x$, we conclude that these functions must vanish for any $H_x \subset \mathsf{int}(X)$. As a result, $\theta = 0$ globally in $\mathsf{int}(X)$ and, by continuity, $\theta$ vanishes on $X$.
%\bigskip
%%%%%%%%%%%%% KEEP
%\textcolor{red}{{\bf WORK}} 
%In fact, $\hat{\mathsf P}$ is also a monomorphism. This follows from the canonical isomorphism $C^\infty(\mathcal T(v))\hat\otimes F \approx C^\infty(\mathcal T(v),  F)$, available for any separable, complete, locally convex space vector space $F$ \cite{Grot}.  In particular, we get an isomorphism
%$C^\infty(\mathcal T(v))\hat\otimes f^\ast(C^\infty(\R)) \approx C^\infty(\mathcal T(v), f^\ast(C^\infty(\R)))$, where $C^\infty(\mathcal T(v), f^\ast(C^\infty(\R)))$ denotes the vector space of smooth maps from $\mathcal T(v)$ to the vector space $f^\ast(C^\infty(\R))$. 
%%%%%%%%%%%%% KEEP
%%%%
%\bigskip
\hfill
\end{proof}

Consider now the ``known" homomorphism of algebras
\begin{eqnarray}\label{eq.alpha_d_alg}
(\a^\d)^\ast:\; C^\infty(\mathcal T(v))\,\hat\otimes\, C^\infty(\R) \stackrel{\mathsf{\approx res_\mathcal T^\d}\, \hat\otimes\, (f^\d)^\ast}{\longrightarrow} \nonumber \\
\longrightarrow C^\infty(\d_1X, v)\, \hat\otimes \, (f^\d)^\ast(C^\infty(\R))
 \stackrel{\hat{\mathsf{R}}^\d}{\longrightarrow} C^\infty(\d_1X),
\end{eqnarray}
utilizing the boundary data.
Here, by the definition of $C^\infty(\d_1X, v)$,  %$\mathsf{inc}: C^\infty(\d_1X, v) \subset C^\infty(\d_1X)$ is the inclusion, 
$\mathsf{res}^\d_{\mathcal T} : C^\infty(\mathcal T(v)) \to C^\infty(\d_1X, v)$ is an isomorphism,  and 
$\hat{\mathsf{R}}^\d$ denotes the completion of the bounded homomorphism $\mathsf{R}^\d$ that takes each element $\sum_i h_i \otimes (f^\d \circ g_i)$, where $h_i \in C^\infty(\d_1X, v)$ and $g_i \in C^\infty(\R)$, to the sum $\sum_i h_i \cdot (g_i \circ f^\d)$.\smallskip

The next lemma shows that the hypotheses of Theorem \ref{th.main_alg} are not restrictive, even when $\d_1 X$ has many connected components.

\begin{lemma} Any traversing vector field $v$ on a connected compact manifold $X$ admits a Lyapunov function $f: X \to \R$ such that $f(X) = f(\d_1X)$.
\end{lemma}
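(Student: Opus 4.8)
The plan is to reduce the statement to a connectivity property of the boundary trace and then to force that property by a trajectory-invariant modification of an arbitrary Lyapunov function. First I would record the automatic constraints. Since $df(v)>0$ forces $v$ to be nowhere zero, a Lyapunov function $f$ has no interior extremum along the flow direction: flowing a putative interior minimum (resp. maximum) slightly backward (resp. forward) keeps us in $X$ and strictly decreases (resp. increases) $f$. Hence both $\min_X f$ and $\max_X f$ are attained on $\d_1X$, so $f(X)=[a,b]$ with $a=\min_{\d_1X}f$ and $b=\max_{\d_1X}f$. As $\d_1X$ is a compact manifold it has finitely many connected components $C_1,\dots,C_m$, and each image $f(C_j)$ is a closed subinterval $[a_j,b_j]\subseteq[a,b]$, with $\min_j a_j=a$ and $\max_j b_j=b$. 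Therefore $f(X)=f(\d_1X)$ is equivalent to the single assertion that $f(\d_1X)=\bigcup_j[a_j,b_j]$ be connected.

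Next I would fix any Lyapunov function $f_0$ (it exists because $v$ is traversing) and seek the desired $f$ in the form $f=f_0+g$ with $g\in C^\infty(\mathcal T(v))$ trajectory-invariant, i.e. $\mathcal L_v g=0$. Such a modification is harmless, since $df(v)=df_0(v)+\mathcal L_v g=df_0(v)>0$, so $f$ is again a Lyapunov function. The target is to arrange that all the component intervals share a single common value $c_0$: once $c_0\in f(C_j)$ for every $j$, the intervals $f(C_j)$ all contain $c_0$, their union is the single interval $[\min_j a_j,\max_j b_j]=[a,b]$, and the lemma follows.

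To build such a $g$ I would choose, for each component $C_j$, a point $y_j\in C_j$ so that the trajectories $\gamma_{y_1},\dots,\gamma_{y_m}$ are pairwise distinct. This is possible whenever $\dim X\ge 2$: then each $C_j$ is positive-dimensional, so $\Gamma(C_j)\subset\mathcal T(v)$ is infinite (a single trajectory meets $\d_1X$ in a finite set), and the $y_j$ may be selected one at a time, each time avoiding the finitely many fibers already used. The distinct trajectories $\gamma_{y_j}$ are disjoint compacta, hence can be enclosed in pairwise disjoint flow-tubes; exactly as in the point-separation step of Lemma \ref{lem.3.8_iso}, each tube carries a trajectory-invariant bump $b_j\in C^\infty(\mathcal T(v))$ with $b_j\equiv 1$ on $\gamma_{y_j}$ and $b_j\equiv 0$ on $\gamma_{y_i}$ for $i\ne j$. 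Then $g:=\sum_{j}(c_0-f_0(y_j))\,b_j$ is smooth and trajectory-invariant, and it yields $f(y_j)=f_0(y_j)+g(y_j)=c_0$ for every $j$, so that $c_0\in f(C_j)$ for all $j$, as required.

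The substantive point — and where I expect the only real difficulty — is the passage from $f(X)$ to $f(\d_1X)$. One is tempted to chain the intervals through the trajectory space by following the entry- and exit-value functions $\gamma\mapsto\min_\gamma f$ and $\gamma\mapsto\max_\gamma f$ on $\mathcal T(v)$; but these are only semicontinuous (this is precisely the discontinuity of the causality map $C_v$ emphasized in Section 2), so their images need not be intervals and the naive chaining fails. Routing the argument through the genuinely connected boundary components $C_j$ and forcing a common value $c_0$ is what circumvents this obstruction. I would finally note that the hypothesis $\dim X\ge 2$ is essential here: for $X=[0,1]$ with $v=\d_u$ the boundary is a two-point set, so $f(\d_1X)$ is never the interval $f(X)$; equivalently, both components then lie on the same unique trajectory and no separating $g$ exists.
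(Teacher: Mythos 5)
Your proof is correct, and it takes a genuinely different route from the paper's. Both arguments start from the same reduction: the extrema of a Lyapunov function are attained on $\partial_1 X$, so $f(X)=f(\partial_1X)$ amounts to connectivity of $f(\partial_1X)$. From there the paper proceeds \emph{iteratively}: it picks a boundary point $B_k$ realizing the top $b_k$ of a gap in $f(\partial_1X)$ and deforms $f$ upward near $B_k$ so that two adjacent intervals of $f(\partial_1X)$ merge, decreasing the number of components by at least one per step. That route must re-verify at each step that the deformed function is still Lyapunov, and, as written, it is delicate precisely when every point realizing $b_k$ lies in the entry region $\partial_1^+X(v)$ (e.g., the inner circle of an annulus with a radial field): a bump genuinely supported near such a $B_k$ cannot keep $df(v)>0$, and the increase must instead be propagated along the whole forward flow tube of $B_k$ --- the paper's phrase ``$B_k$-localized deformation'' glosses this. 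Your one-shot correction $f=f_0+g$ with $g\in C^\infty(\mathcal T(v))$ sidesteps the issue entirely, since $\mathcal L_v g=0$ gives $df(v)=df_0(v)$ with no Lyapunov condition to re-check, and pinning the common value $c_0$ on every boundary component makes $f(\partial_1X)$ connected in a single step; the bump functions you need are exactly the paper's own tool from the point-separation step of Lemma \ref{lem.3.8_iso}, so you inherit no more technical debt than the paper already carries. What your route costs is the selection of pairwise distinct trajectories through the components, hence $\dim X\ge 2$ --- but you correctly observe that this hypothesis is genuinely necessary: for $X=[0,1]$ the statement is false for every Lyapunov function, a degenerate case the paper's formulation and proof silently overlook (there the point realizing $b_1$ is the entry point, and no admissible deformation exists). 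One small bookkeeping remark: your use of the finiteness of $\gamma\cap\partial_1X$ (to see that $\Gamma(C_j)$ is infinite) implicitly invokes that $v$ is boundary generic, which is consistent with the paper's standing assumptions --- its own proof likewise uses the stratum $\partial_2X(v)$.
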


\begin{proof} 
Note that, for any Lyapunov function $f$,  the image $f(\d_1X)$ is a disjoint union of finitely many closed intervals $\{I_k = [a_k, b_k]\}_k$, where the index $k$ reflects the natural order of intervals in $\R$. We will show how to decrease, step by step, the number of these intervals by deforming the original function $f$. Note that the local extrema of any Lyapunov function on $X$ occur on its boundary $\d_1X$ and away from the locus $\d_2X(v)$ where $v$ is tangent to $\d_1X$. Consider a pair of points $A_{k+1}, B_k \in \d_1X \setminus \d_2X(v)$ such that $f(A_{k+1})= a_{k+1}$ and $f(B_{k})= b_{k}$, where $a_{k+1} > b_{k}$. Then we can increase $f$ in the vicinity of its local maximum $B_k$ so that the $B_k$-localized deformation $\tilde f$ of $f$ has the property $\tilde f(B_k) > f(A_{k+1})$ and $\tilde f$ is a Lyapunov function for $v$. This construction decreases the number of intervals in $\tilde f(\d_1X)$ in conparison to $f(\d_1X)$ at least by one. \hfill
\end{proof}

We are ready to state the main result of this paper.

\begin{theorem}\label{th.main_alg} Assuming that the range $f^\d(\d_1X)$ is a connected interval of $\R$,\footnote{which is the case for a connected $\d_1X$} the algebra $C^\infty(X)$ is isomorphic to the subalgebra  $$C^\infty(\d_1X, v)\, \hat\otimes \, (f^\d)^\ast(C^\infty(\R)) \subset C^\infty(\d_1X)  \hat\otimes C^\infty(\d_1X).$$

Moreover,  by combining (\ref{eq.alpha_alg}) with (\ref{eq.alpha_d_alg}), we get a commutative diagram
\begin{eqnarray}\label{eq.DIAGRAM}
C^\infty(\mathcal T(v))\,\hat\otimes\, f^\ast(C^\infty(\R))  \stackrel{\hat{\mathsf{R}}}{\longrightarrow} C^\infty(X) \nonumber \\
\downarrow \mathsf{id}\, \hat\otimes\, H_f^\d  \quad \quad \quad \quad  \quad \quad \quad \quad    \downarrow  \mathsf{res} \; \nonumber \\
C^\infty(\d_1X, v)\, \hat\otimes \, (f^\d)^\ast(C^\infty(\R))
 \stackrel{\hat{\mathsf{R}}^\d}{\longrightarrow} C^\infty(\d_1X),
\end{eqnarray} 
whose vertical homomorphism $\mathsf{id}\, \hat\otimes\, H_f^\d $ and the horizontal homomorphism $\hat{\mathsf{R}}$ are isomorphisms, and the vertical epimorphism $\mathsf{res}$ is the obvious restriction operator.\smallskip

As a result, inverting $\mathsf{id}\, \hat\otimes\, H_f^\d$, we get an algebra isomorphism 
\begin{eqnarray}\label{eq.3.4_A}
\mathcal H(v, f): C^\infty(\d_1X, v)\, \hat\otimes \, (f^\d)^\ast(C^\infty(\R)) \approx C^\infty(X).
\end{eqnarray}
\end{theorem}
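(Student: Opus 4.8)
The plan is to deduce the theorem by assembling the isomorphisms already in hand and invoking the functoriality of the completed tensor product; the sole analytically substantial statement---that $\hat{\mathsf P}$ is an isomorphism onto $C^\infty(X)$---is exactly the content of Lemma \ref{lem.3.8_iso}, so the remaining work is essentially formal.

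First I would identify the top horizontal map $\hat{\mathsf R}$ of the diagram (\ref{eq.DIAGRAM}) with the map $\hat{\mathsf P}$ of Lemma \ref{lem.3.8_iso}: both send $\sum_i h_i\otimes(f\circ g_i)$ to $\sum_i h_i\cdot(g_i\circ f)$. Hence $\hat{\mathsf R}=\hat{\mathsf P}$ is a topological isomorphism $C^\infty(\mathcal T(v))\,\hat\otimes\, f^\ast(C^\infty(\R))\approx C^\infty(X)$. Next I would treat the left vertical map, which is the completion of the algebraic tensor product of two maps: the isomorphism $\mathsf{res}_\mathcal T^\d\colon C^\infty(\mathcal T(v))\to C^\infty(\d_1X,v)$ established just before Lemma \ref{lem.A} (written as $\mathsf{id}$ in the statement, under the identification it provides), and the map $H_f^\d\colon f^\ast(C^\infty(\R))\to(f^\d)^\ast(C^\infty(\R))$. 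Under the standing hypothesis that $f^\d(\d_1X)$ is a connected interval, Lemma \ref{lem.3.7_iso} gives that $H_f^\d$ is an isomorphism. Since all the factors are nuclear, the completed tensor product is functorial, so the tensor product of these two topological isomorphisms extends to a topological isomorphism $\mathsf{id}\,\hat\otimes\,H_f^\d$ of the completions.

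I would then verify that (\ref{eq.DIAGRAM}) commutes on an elementary tensor $h\otimes(f\circ g)$: going right-then-down yields $\mathsf{res}\big(h\cdot(g\circ f)\big)=h|_{\d_1X}\cdot(g\circ f^\d)$, while going down-then-right yields $\hat{\mathsf R}^\d\big(h|_{\d_1X}\otimes(f^\d\circ g)\big)=h|_{\d_1X}\cdot(g\circ f^\d)$; the two composites are continuous and agree on the dense subspace of algebraic tensors, hence coincide. With $\hat{\mathsf R}$ and $\mathsf{id}\,\hat\otimes\,H_f^\d$ both isomorphisms, I would define $\mathcal H(v,f)=\hat{\mathsf R}\circ(\mathsf{id}\,\hat\otimes\,H_f^\d)^{-1}$, the asserted isomorphism $C^\infty(\d_1X,v)\,\hat\otimes\,(f^\d)^\ast(C^\infty(\R))\approx C^\infty(X)$; commutativity then exhibits $\mathsf{res}$ as the surjective restriction operator. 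For the first assertion, since $C^\infty(\d_1X,v)$ and $(f^\d)^\ast(C^\infty(\R))$ are subalgebras of $C^\infty(\d_1X)$, nuclearity (injectivity of $\hat\otimes$) realizes their completed tensor product as a subalgebra of $C^\infty(\d_1X)\,\hat\otimes\,C^\infty(\d_1X)$.

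I expect the main obstacle to be the functoriality input: justifying that the algebraic tensor product of two \emph{topological} isomorphisms of nuclear Fréchet spaces extends to a bicontinuous isomorphism of the completed tensor products, and, for the subalgebra claim, that $\hat\otimes$ preserves closed embeddings. This rests on Grothendieck's theory, in which for nuclear spaces the projective and injective tensor topologies coincide and $\hat\otimes$ is an exact functor; the delicate point is to confirm that $\mathsf{res}_\mathcal T^\d$ and $H_f^\d$ are genuinely bicontinuous, so that their inverses are available to form $(\mathsf{id}\,\hat\otimes\,H_f^\d)^{-1}$.
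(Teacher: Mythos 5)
Your proposal is correct and follows essentially the same route as the paper: the paper's proof likewise obtains $\mathcal H(v,f)$ as $\hat{\mathsf R}\circ(\mathsf{id}\,\hat\otimes\,H_f^\d)^{-1}$, citing Lemma \ref{lem.3.7_iso} for the left vertical isomorphism and Lemma \ref{lem.3.8_iso} for $\hat{\mathsf R}$ (which is indeed the map $\hat{\mathsf P}$ under a change of notation). You spell out the routine points the paper leaves implicit (commutativity on elementary tensors, functoriality and injectivity of $\hat\otimes$ for nuclear spaces), while the paper adds only one extra observation you omit --- that the kernel of $\hat{\mathsf R}^\d$ is the principal ideal generated by $(\hat{\mathsf R}\circ(\mathsf{id}\,\hat\otimes\,H_f^\d))^{-1}(z)$ for a defining function $z$ of $\d_1X$ --- which is supplementary and not needed for the stated isomorphisms.
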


\begin{proof} Consider the commutative diagram (\ref{eq.DIAGRAM}).
Its upper-right conner is ``unknown", while the lower row is ``known" and represents the boundary data, and $\mathsf{res}$ is obviously an epimorphism. By Lemma \ref{lem.3.7_iso}, the left vertical arrow $\mathsf{id}\, \hat\otimes\, H_f^\d$ is an isomorphism. Since, by Lemma \ref{lem.3.8_iso}, $\hat{\mathsf{R}}$ is an isomorphism, it follows that $\hat{\mathsf{R}} \circ (\mathsf{id}\, \hat\otimes\, H_f^\d)^{-1}$ must be an isomorphism as well. In particular, $\hat{\mathsf{R}}^\d$ is an epimorphism, whose kernel is isomorphic to the smooth functions on $X$ whose restrictions to $\d_1X$ vanish. If $z \in C^\infty(X)$ is a smooth function such that zero is its regular value, $z^{-1}(0) = \d_1X$, and $z >0$ in $\mathsf{int}(X)$, then the kernel of $\mathsf{res}$ is the principle ideal $\mathsf m(z)$, generated by $z$. Therefore, by the commutativity of (\ref{eq.DIAGRAM}), the kernel of the homomorphism $\hat{\mathsf{R}}^\d$ must be also a principle ideal $\mathsf M_\d$, generated by an element  $(\hat{\mathsf{R}} \circ (\mathsf{id}\, \hat\otimes\, H_f^\d))^{-1}(z)$.
%{\bf  WORK}
\end{proof}

\begin{corollary}\label{cor.3.1} If the range $f^\d(\d_1X)$ is a connected interval in $\R$, then the two   topological algebras $C^\infty(\d_1X, v) \subset C^\infty(\d_1X)$ and $(f^\d)^\ast(C^\infty(\R)) \subset C^\infty(\d_1X)$ determine, up to an isomorphism, the  algebra $C^\infty(X)$, and thus determine the smooth topological type of the manifold $X$. %\hfill $\diamondsuit$
\end{corollary}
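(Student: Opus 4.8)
The plan is to deduce this corollary directly from Theorem \ref{th.main_alg}, combining two ingredients: the canonical (functorial) nature of the topological tensor product for nuclear algebras, and the classical reconstruction of a smooth manifold from its algebra of smooth functions. The first ingredient converts the concrete isomorphism \eqref{eq.3.4_A} into an intrinsic statement about isomorphism classes; the second passes from the algebra $C^\infty(X)$ back to the smooth manifold $X$.

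First, I would recall that by the isomorphism \eqref{eq.3.4_A} of Theorem \ref{th.main_alg},
$$C^\infty(X) \approx C^\infty(\d_1X, v)\, \hat\otimes \, (f^\d)^\ast(C^\infty(\R)).$$
The point to emphasize is that the right-hand side depends only on the isomorphism types of the two topological factors. Indeed, as established in the discussion preceding the theorem, $C^\infty(\d_1X, v) \approx C^\infty(\mathcal T(v))$ and $(f^\d)^\ast(C^\infty(\R))$ are nuclear locally convex algebras in the Whitney topology; for nuclear spaces the projective and injective tensor topologies coincide, and by Grothendieck's theory \cite{Grot} the completion $\hat\otimes$ is uniquely determined by its two factors. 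Hence $\hat\otimes$ is a functor of the isomorphism types of its arguments, so the pair $\big(C^\infty(\d_1X, v),\, (f^\d)^\ast(C^\infty(\R))\big)$ determines $C^\infty(X)$ up to a topological algebra isomorphism, with no residual reference to the bulk $X$ or to the embedding $\a$ used to build $\mathcal H(v,f)$.

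Second, I would invoke the classical theorem (\cite{KMS}, \cite{Na}) recalled in the Introduction: the topological algebra $C^\infty(X)$, taken in the Whitney topology, determines the smooth topological type of the compact manifold $X$. Concretely, $X$ is recovered as the space of $\R$-algebra characters $C^\infty(X) \to \R$ (equivalently, as the space of maximal ideals), and its smooth structure is encoded in the algebra operations, so that an isomorphism of function algebras forces a diffeomorphism of the underlying manifolds. Chaining the two steps, the boundary subalgebras $C^\infty(\d_1X, v)$ and $(f^\d)^\ast(C^\infty(\R))$ determine $C^\infty(X)$ up to isomorphism and therefore determine $X$ up to diffeomorphism.

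The bookkeeping above is routine; the one point that genuinely deserves care — and which I expect to be the crux — is verifying that the reconstruction of $C^\infty(X)$ uses the two boundary algebras only through their abstract isomorphism types, and not through their concrete embeddings into $C^\infty(\d_1X)$. This is exactly where the uniqueness and functoriality of $\hat\otimes$ for nuclear algebras is indispensable: it is what licenses the passage from the extrinsic "boundary data" description, in which $\mathcal H(v,f)$ was constructed with knowledge of the interior foliations, to the intrinsic claim that the isomorphism classes of the two factors alone pin down $C^\infty(X)$. Once this is secured, the appeal to \cite{KMS}, \cite{Na} is immediate.
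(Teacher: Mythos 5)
Your proposal is correct and takes essentially the same route as the paper: apply the isomorphism (\ref{eq.3.4_A}) of Theorem \ref{th.main_alg}, note that the right-hand side depends only on the isomorphism types of the two boundary algebras, and then invoke the classical fact \cite{KMS} that the topological algebra $C^\infty(X)$, via its space of nontrivial maximal ideals, determines the smooth topological type of $X$. The only difference is one of emphasis: the paper supplements this chain with an explicit point-by-point dictionary, identifying each $x = \g \cap f^{-1}(c) \in X$ with the maximal ideal $\mathcal H(v,f)\big(\langle \mathsf m^\d_v, \mathsf m^\d_f \rangle\big)$ generated by a pair of boundary ideals (nontrivial exactly when $\g \cap f^{-1}(c) \neq \emptyset$, using connectedness of the range of $f^\d$), whereas you instead spell out the functoriality of $\hat\otimes$ for nuclear algebras, a step the paper leaves implicit.
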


\begin{proof} We call a maximal ideal of an algebra $\mathcal A$ {\sf nontrivial} if it is different from $\mathcal A$. 

By Theorem \ref{th.main_alg}, the algebra $C^\infty(X)$ is determined by the two algebras on $\d_1X$, up to an isomorphism. In turn, the algebra $C^\infty(X)$ determines the smooth topological type of $X$, viewed as a ringed space. This fact is based on interpreting $X$ as the space $\mathcal M(C^\infty(X))$ of nontrivial maximal ideals of the algebra $C^\infty(X)$ \cite{KMS}.  

Let $\mathsf m^\d_v\, \triangleleft \;  C^\infty(\d_1X, v)$ and $\mathsf m^\d_f \, \triangleleft \;(f^\d)^\ast(C^\infty(\R))$ be a pair of nontrivial maximal ideals. Note that $\mathsf m^\d_v = \mathsf m^\d_v([\g])$ consists of functions from $C^\infty(\d_1X, v)$ that vanish on the locus $\g \cap \d_1X$,  and $\mathsf m^\d_f = \mathsf m^\d_f(c)$ consists of functions from $(f^\d)^\ast(C^\infty(\R))$ that vanish on the locus $\d_1X \cap f^{-1}(c)$, where $c \in f(\d_1X) \subset \R$.  We denote by $\langle \mathsf m^\d_v, \mathsf m^\d_f \rangle$ the maximal ideal of $C^\infty(\d_1X, v)\, \hat\otimes \,  (f^\d)^\ast(C^\infty(\R))$ that contains both ideals $\mathsf m^\d_v \hat\otimes 1$ and $1 \hat\otimes \mathsf m^\d_f$. If the range $f^\d(\d_1X)$ is a connected interval of $\R$ and $\langle \mathsf m^\d_v, \mathsf m^\d_f \rangle$ is a nontrivial ideal, then $\g \cap f^{-1}(c) \neq \emptyset$. Otherwise,  $\g \cap f^{-1}(c) = \emptyset$. Therefore, with the help of the isomorphism $\mathcal H(v, f)$ from (\ref{eq.3.4_A}), the nontrivial maximal ideals of $C^\infty(X)$ (which by \cite{KMS} correspond to points $x = \g \cap f^{-1}(c) \in X$) are of the form $\mathcal H(v, f)\big(\langle \mathsf m^\d_v, \mathsf m^\d_f \rangle\big)$.
\end{proof}

\begin{corollary} Let the range $f^\d(\d_1X)$ be a connected interval of $\R$. With the isomorphism $\mathcal H(v, f)$ from (\ref{eq.3.4_A}) being fixed, any algebra isomorphism $\Psi^\d: C^\infty(\d_1X) \to C^\infty(\d_1X)$  that preserves the subalgebras $C^\infty(\d_1X, v)$ and $(f^\d)^\ast(C^\infty(\R))$ extends canonically to the algebra isomorphism $\Psi: C^\infty(X) \to C^\infty(X)$. 

Thus, an action of any group $\mathsf G$ of such isomorphisms $\Psi^\d$ extends canonically to a $\mathsf G$-action on the algebra $C^\infty(X)$ and, via it, to a $\mathsf G$-action on $X$ by smooth diffeomorphisms.
%\hfill $\diamondsuit$
\end{corollary}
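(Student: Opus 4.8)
The plan is to transport a boundary automorphism across the isomorphism $\mathcal H(v,f)$ of (\ref{eq.3.4_A}). Write $\mathcal E := C^\infty(\d_1X, v)\, \hat\otimes \, (f^\d)^\ast(C^\infty(\R))$ for the subalgebra appearing in Theorem \ref{th.main_alg}. Given $\Psi^\d$ as in the hypothesis, the assumption that it preserves each of the two subalgebras means precisely that it restricts to topological algebra automorphisms $\Psi^\d_v := \Psi^\d|_{C^\infty(\d_1X, v)}$ of $C^\infty(\d_1X, v)$ and $\Psi^\d_f := \Psi^\d|_{(f^\d)^\ast(C^\infty(\R))}$ of $(f^\d)^\ast(C^\infty(\R))$. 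First I would form the projective tensor product $\Psi^\d_v\, \hat\otimes\, \Psi^\d_f$. Since $\Psi^\d$ is bounded in the Whitney topology, so are its two restrictions, and hence their tensor product is bounded for the projective crossnorms $\{\|\sim\|_k\}$ of (\ref{eq.norms_A}); by the nuclearity already used to define $\hat\otimes$, it therefore descends to the completion and yields a continuous endomorphism $\tilde\Psi := \Psi^\d_v\, \hat\otimes\, \Psi^\d_f$ of $\mathcal E$. Applying the same construction to $(\Psi^\d)^{-1}$, which also preserves both subalgebras, produces a two-sided inverse, so $\tilde\Psi$ is an \emph{automorphism} of $\mathcal E$. (Equivalently, $\tilde\Psi$ is the restriction of $\Psi^\d\, \hat\otimes\, \Psi^\d$ acting on $C^\infty(\d_1X)\, \hat\otimes\, C^\infty(\d_1X)$, which carries $\mathcal E$ onto itself because each tensor factor is preserved.)

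With $\tilde\Psi$ in hand, I define
$$\Psi \;:=\; \mathcal H(v,f)\, \circ\, \tilde\Psi\, \circ\, \mathcal H(v,f)^{-1}\; :\; C^\infty(X) \longrightarrow C^\infty(X),$$
which is an algebra automorphism as a composite of isomorphisms. Its canonicity, once $\mathcal H(v,f)$ is fixed, and its functional dependence on $\Psi^\d$ are read directly off this formula, which settles the first assertion.

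For the group-action statement I would verify that $\Psi^\d \mapsto \Psi$ is a homomorphism from $\mathsf G$ into the automorphism group of $C^\infty(X)$. This reduces to two elementary compatibilities: restriction to a subalgebra respects composition, so $(\Phi^\d\circ\Psi^\d)_v = \Phi^\d_v\circ\Psi^\d_v$ and likewise for the $f$-factor; and the projective tensor product is functorial, so $(\Phi^\d_v\circ\Psi^\d_v)\,\hat\otimes\,(\Phi^\d_f\circ\Psi^\d_f) = (\Phi^\d_v\,\hat\otimes\,\Phi^\d_f)\circ(\Psi^\d_v\,\hat\otimes\,\Psi^\d_f)$. Conjugation by the single fixed isomorphism $\mathcal H(v,f)$ then preserves composition, so $\mathsf G$ acts on $C^\infty(X)$ by topological algebra automorphisms. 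Finally, since $C^\infty(X)$ recovers $X$ as its space $\mathcal M(C^\infty(X))$ of nontrivial maximal ideals, and isomorphisms of $C^\infty(X)$ correspond to smooth diffeomorphisms of $X$ (the reconstruction results of \cite{KMS}, \cite{Na} already invoked in Corollary \ref{cor.3.1}), the $\mathsf G$-action on $C^\infty(X)$ transports to a $\mathsf G$-action on $X$ by diffeomorphisms.

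The main obstacle I anticipate lies not in the algebra but in the topology: one must ensure that $\Psi^\d_v\, \hat\otimes\, \Psi^\d_f$ genuinely extends to the completed tensor product and restricts to a \emph{topological} automorphism of $\mathcal E$, so that the concluding step, which requires $\Psi$ to be an isomorphism of algebras in the Whitney topology rather than a bare algebraic one, applies. This rests on the boundedness of $\Psi^\d$ (hence of its two restrictions) together with the nuclearity of the factors, and on both subalgebras being invariant under $\Psi^\d$ \emph{and} $(\Psi^\d)^{-1}$; the remaining verifications, that $\tilde\Psi$ is multiplicative and that the assignment is functorial, are routine.
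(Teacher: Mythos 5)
Your proposal is correct and takes essentially the same route as the paper, whose proof is a single line: invoking Mrcun's theorem \cite{Mr} (every algebra isomorphism of smooth function algebras is induced by a unique diffeomorphism) together with Theorem \ref{th.main_alg}, it declares the argument to be ``on the level of definitions'' --- and your conjugation $\Psi = \mathcal H(v,f)\circ\bigl(\Psi^\d_v\,\hat\otimes\,\Psi^\d_f\bigr)\circ\mathcal H(v,f)^{-1}$, with the functoriality check for the $\mathsf G$-action, is precisely that definitional unwinding made explicit. The one point you flag as a potential obstacle, boundedness of $\Psi^\d$ and of its two restrictions in the Whitney topology, is settled by the same citation \cite{Mr}: $\Psi^\d$ is induced by a diffeomorphism of $\d_1X$ and is therefore automatically continuous, after which your nuclearity argument for extending to the completed tensor product goes through as written.
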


\begin{proof} By \cite{Mr}, any algebra isomorphism $\Psi: C^\infty(X_1) \to  C^\infty(X_2)$ is induced by a unique smooth diffeomorphism $\Phi: X_1 \to X_2$. With this fact in hand, by Theorem \ref{th.main} and Theorem \ref{th.main_alg}, the proof is on the level of definitions.
\end{proof}

 It remains to address the following crucial  question:  how to characterize intrinsically the trace $C^\infty(\d_1X, v)$ of the algebra $C^\infty(\mathcal T(v)) \approx \ker\{\mathcal L_v: C^\infty(X) \to C^\infty(X)\}$ in the algebra $C^\infty(\d_1X)$?
\smallskip

Evidently,  functions from $C^\infty(\d_1X, v)$ are constant along each $C_v$-``trajectory" $\g^\d := \g \cap \d_1X$ of the causality map. Furthermore, any smooth function $\psi: \d_1X \to \R$ that is constant on each finite set $\g^\d$ gives rise to a unique \emph{continuous} function $\phi$ on $X$ that is constant along each $v$-trajectory $\g$. However, such functions $\phi$ may not be automatically \emph{smooth} on $X$ (a priory, they are just H\"{o}lderian with some control of the H\"{o}lder exponent that depends on the dimension of $X$ only)! This potential complication leads to the following question.%\smallskip

%For a traversing $v$,  consider the algebra $\mathsf{Ker}(\mathcal L_v) \approx  C^\infty(\mathcal T(v))$ of smooth functions on $X$ that are constants along each $v$-trajectory. 

\begin{question}\label{q.3.1} For a traversing and boundary generic (alternatively, traversally generic) vector field $v$ on $X$, is it possible to characterize the subalgebra  $C^\infty(\d_1X, v)   \subset C^\infty(\d_1 X)$ in terms of the causality map $C_v$ and, perhaps, some additional $v$-generated data, residing in $\d_1 X$? \hfill $\diamondsuit$
\end{question}

To get some feel for a possible answer, we need the notion of the {\sf Morse stratification} of the boundary $\d_1X$ that a vector field $v$ generates \cite{Mo}. 

Let $\dim(X) = n+1$ and $v$ be a boundary generic traversing vector field on $X$. 

Let us recall the definition of the Morse stratification $\{\d_j^\pm X(v)\}_{j \in [1, n+1]}$ of $\d_1X$. We define the set $\d_2X(v)$ as the locus where $v$ is tangent to $\d_1X$. It separates $\d_1X$ into $\d_1^+X(v)$ and $\d_1^+X(v)$. Let $\d_3X(v)$ be the locus  where $v$ is tangent to $\d_2X(v)$. For a boundary generic $v$, $\d_2X(v)$ is a smooth submanifold of $\d_1X(v)$ and $\d_3X(v)$ is a submanifold that divides $\d_2X(v)$ into two regions, $\d_2^+X(v)$ and $\d_2^-X(v)$. Along $\d_2^+X(v)$, $v$ points inside of $\d_1^+X(v)$, and along $\d_2^-X(v)$, $v$ points inside of $\d_1^-X(v)$.  This construction self-replicates until we reach finite sets $\d_{n+1}^\pm X(v)$. 

By definition, the {\sf boundary generic} vector fields \cite{K1} are the ones that satisfy certain nested transversality of $v$ with respect to the boundary $\d_1X$, the transversality that guaranties that all the Morse strata $\d_jX(v)$ are regular closed submanifolds and  all the strata $\d_j^\pm X(v)$ are compact submanifolds.

For a traversing boundary generic $v$, the map $C_v: \d_1^+X(v) \to \d_1^-X(v)$ makes it possible to recover the Morse stratification $\{\d_j^\pm X(v)\}_{j>0}$  (\cite{K4}).

\smallskip

Let us describe now a good candidate for the subalgebra $C^\infty(\d_1X, v)$ in the algebra $C^\infty(\d X)$. %Let $\mathsf n_j(v)$ be the ideal of smooth functions on $\d  X$ that vanish on the submanifold $\d_{j +1}X(v)$. 

We denote by $\mathcal L_{v}^{(k)}$ the $k$-th iteration of the Lie derivative $\mathcal L_{v}$. Let $\mathsf M(v)$ be the subalgebra of smooth functions $\psi: \d_1 X \to \R$  such that  $(\mathcal L_{v}^{(k)}\psi) \big |_{\d_{k+1}X(v)} = 0$ for all $k \leq n$ (by the Leibniz rule, $\mathsf M(v)$ is indeed a subalgebra). %This condition does not depend on a particular choice of the projection $v_1$ of $v$.
%In other words, $\psi \in \mathsf m_j(v)$ if and only if $\mathcal L_v^{(k)}\psi \in \mathsf n_k(v)$ for all  $k \leq j$. Put $\mathsf m(v) := \cap_{j=1}^n \mathsf m_j(v)$. 
Let us denote by $\mathsf M(v)^{C_v}$ the subalgebra of functions from $\mathsf M(v)$ that are constant on each (finite) $C_v$-trajectory $\g^\d := \g \cap \d_1X \subset \d_1 X$.

\begin{conjecture}\label{conj.3.1} %{\bf ???? WORK} 
Let $v$ be a traversing and boundary generic vector field on a smooth compact $(n+1)$-manifold $X$. %Consider the subalgebra $C^\infty(\d_1X, v) \subset C^\infty(\d_1X)$. 
Then  the algebra $C^\infty(\d_1X, v)$ coincides with the subalgebra $\mathsf M(v)^{C_v} \subset C^\infty(\d_1 X)$. 

In particular, $C^\infty(\d_1X, v)$ can be determined by the causality map $C_v$ and the restriction of $v$ to $\d_2X(v)$.
\hfill $\diamondsuit$
\end{conjecture}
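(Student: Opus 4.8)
The plan is to establish the two inclusions separately, since the statement $C^\infty(\d_1X,v)=\mathsf M(v)^{C_v}$ splits into the necessity and the sufficiency of the defining conditions. For the necessity $C^\infty(\d_1X,v)\subseteq\mathsf M(v)^{C_v}$, I would take $\psi=g|_{\d_1X}$ with $g\in C^\infty(\mathcal T(v))$, i.e. $g$ extends to $\hat g$ on $\hat X$ with $\mathcal L_{\hat v}\hat g\equiv 0$. Constancy of $\psi$ on each $C_v$-trajectory $\g^\d$ is immediate, since $g$ is constant along $\g$. For the jet conditions I would first record the auxiliary fact that along $\d_{k+1}X(v)$ the field $v$ is tangent to each of $\d_1X,\d_2X(v),\dots,\d_kX(v)$, so that the iterated \emph{tangential} derivative $\mathcal L_v^{(k)}\psi$ is well defined on $\d_{k+1}X(v)$ and there coincides with $(\mathcal L_{\hat v}^{(k)}\hat g)|_{\d_{k+1}X(v)}$: each successive derivative uses only directions tangent to the stratum already reached, all of which are recorded by $\psi=g|_{\d_1X}$. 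Since $\mathcal L_{\hat v}^{(k)}\hat g\equiv 0$, this yields $(\mathcal L_v^{(k)}\psi)|_{\d_{k+1}X(v)}=0$, hence $\psi\in\mathsf M(v)^{C_v}$.

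For the sufficiency $\mathsf M(v)^{C_v}\subseteq C^\infty(\d_1X,v)$, take $\psi\in\mathsf M(v)^{C_v}$. Since $\psi$ is constant on each finite fibre $\g^\d$, the construction already recalled in the text produces a unique \emph{continuous} $v$-invariant extension $\phi$ on $X$ with $\phi|_{\d_1X}=\psi$. Because $\phi$ is canonical and globally defined, smoothness is a purely local property, so it suffices to show that $\phi$ is smooth near every $x_0\in X$. Near interior points, and near boundary points where $v$ is transversal to $\d_1X$, a flow-box chart identifies $\phi$ with a transported copy of $\psi$, so smoothness is clear. The entire difficulty concentrates on the tangency strata $\d_{\ge 2}X(v)$.

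There I would invoke the local normal form for a boundary generic $v$ near a point of $\d_{k+1}X(v)\setminus\d_{k+2}X(v)$: coordinates $(u,y)=(u,y_1,\dots,y_n)$ on $\hat X$ in which $v=\d_u$ and $\d_1X$ is the zero locus of a miniversal unfolding of $u^{k+1}$, so that on each trajectory $\{y=\mathrm{const}\}$ the set $\g^\d$ is the real root set of a degree-$(k+1)$ polynomial in $u$. As $\phi$ is $v$-invariant it is $u$-independent, $\phi=\phi(y)$, and the constraint is $\phi(y)=\psi(u_i(y),y)$ for each (Puiseux) root branch $u_i(y)$, the $C_v$-constancy guaranteeing that these values agree. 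Expanding $\psi$ in its Taylor series in $u$ about the collision point and substituting the Puiseux expansions of the $u_i(y)$, the non-smooth fractional-power contributions are governed exactly by the flow-direction derivatives $\mathcal L_v^{(j)}\psi$ at the tangency; the hypotheses $(\mathcal L_v^{(j)}\psi)|_{\d_{j+1}X(v)}=0$ for $j\le k$ are precisely what cancel them, making $\phi(y)$ smooth. The fold case $k=1$ reduces to Whitney's theorem on even functions, the vanishing of $\mathcal L_v\psi$ on $\d_2X(v)$ killing the square-root term.

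The main obstacle is to upgrade this heuristic to a rigorous all-order statement: one must show that the \emph{finite} list of conditions $k\le n$ forces $\phi$ to be smooth to \emph{every} order, not merely up to the tangency order. This rests on the boundary-generic bound that the tangency order of any trajectory to $\d_1X$ is at most $\dim X=n+1$, so that beyond the deepest stratum there are no further obstructions; the clean way to organize the cancellation is a Malgrange--Mather preparation/division argument (or a Glaeser-type symmetric-function smoothness result) applied to the unfolding, controlling the mixed $y$-derivatives uniformly as the roots collide. Making this division uniform across the nested strata, and verifying that the local extensions match smoothly across $\d_{k+1}X(v)\subset\overline{\d_kX(v)}$, is the technically delicate heart of the argument and the reason the statement is posed here as a conjecture rather than a theorem.
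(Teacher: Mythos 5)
There is a genuine gap here, and it is worth being precise about its location, because the paper itself contains \emph{no proof} of this statement: it is posed as Conjecture \ref{conj.3.1}, and the text says only that ``it is easy to check that $C^\infty(\d_1X, v) \subset \mathsf M(v)^{C_v}$; the challenge is to show that the two algebras coincide.'' Your necessity argument (restriction of a $v$-invariant smooth function lies in $\mathsf M(v)^{C_v}$, using that along $\d_{k+1}X(v)$ the field $v$ is tangent to all the strata $\d_1X, \dots, \d_kX(v)$, so the iterated tangential Lie derivatives agree with the ambient ones and vanish) is essentially that easy inclusion made precise, and it is fine. But the sufficiency direction $\mathsf M(v)^{C_v} \subseteq C^\infty(\d_1X, v)$ --- which is the entire content of the conjecture --- remains a program in your write-up, as you yourself concede in the final paragraph. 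The central claim, that the finitely many conditions $(\mathcal L_v^{(j)}\psi)|_{\d_{j+1}X(v)} = 0$, $j \leq n$, cancel \emph{all} fractional-power terms arising from substituting the Puiseux branches $u_i(y)$ into the Taylor expansion of $\psi$, to every order in $y$, is asserted rather than proved; organizing this via a Malgrange--Mather or Glaeser-type division, uniformly as roots collide and across the nested strata $\d_{k+1}X(v) \subset \overline{\d_kX(v)}$ where the branches themselves degenerate, is exactly the open problem, not a routine verification.

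Two further concrete weaknesses. First, the local polynomial normal form you invoke ($v = \d_u$ with $\d_1X$ the zero locus of a versal unfolding of $u^{k+1}$) is established in \cite{K1} for \emph{traversally generic} fields; the conjecture is stated for the larger class of boundary generic $v$, for which no such semi-algebraic model is available, so even the starting point of your local analysis needs justification (or the hypothesis silently strengthened). Second, your fold-case sanity check is instructively off: at a quadratic tangency, $C_v$-constancy alone forces $\psi$ to be even in the flow parameter along each trajectory, so Whitney's even-function theorem applies and the condition $\mathcal L_v\psi|_{\d_2X(v)} = 0$ is then \emph{automatic} (an odd-order derivative of an even function), rather than being the input that ``kills the square-root term.'' This shows that the division of labor between the two defining conditions of $\mathsf M(v)^{C_v}$ --- the $C_v$-invariance and the jet conditions --- is not yet understood even at the lowest stratum, and one should expect the same entanglement, in a harder combinatorial form involving symmetric functions of the colliding roots, at the deeper strata. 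In short: your proposal is a plausible and reasonably well-aimed attack on the conjecture, but it does not prove it, and no proof exists in the paper to compare it against.
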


It is easy to check that $C^\infty(\d_1X, v) \subset \mathsf M(v)^{C_v}$; the challenge is to show that the two algebras coincide.
\smallskip

The Holography Theorem (Corollary \ref{cor.main_reconstruction}) has been established assuming Property {\sf A} from Definition \ref{def.property_A}. If one assumes the validity of Conjecture \ref{conj.3.1}, then, by Corollary \ref{cor.3.1}, we may drop Property {\sf A} from the hypotheses of the Holography Theorem. Indeed, the subalgebras $C^\infty(\d_1X, v)$ and $(f^\d)^\ast(C^\infty(\R))$ would acquire a description in terms of $C_v$ and $f^\d$. This would deliver an independent proof of a natural generalization of Corollary \ref{cor.main_reconstruction}.
\bigskip

{\it Acknowledgments:} The author is grateful to Vladimir Goldshtein for his valuable help with the analysis of spaces of smooth functions.

\end{document}